\theoremstyle{plain}
\newtheorem{theorem}{Theorem}[section]
\theoremstyle{remark}
\newtheorem{remark}[theorem]{Remark}
\theoremstyle{plain}
\newtheorem{corollary}[theorem]{Corollary}
\newtheorem{lemma}[theorem]{Lemma}
\newtheorem{proposition}[theorem]{Proposition}
\newtheorem{definition}[theorem]{Definition}
\numberwithin{equation}{section}
\def\N{{\mathbb N}}
\def\Z{{\mathbb Z}}
\def\R{{\mathbb R}}
\def\K{{\mathbb K}}
\def\C{{\mathbb C}}
\def\T{{\mathbb T}}
\newcommand{\E}{{\mathbb E}}
\renewcommand{\P}{{\mathbb P}}
\newcommand{\F}{{\mathscr F}}
\newcommand{\A}{{\mathscr A}}
\newcommand{\g}{\gamma}
\renewcommand{\O}{\Omega}
\renewcommand{\Re}{\hbox{\rm Re}\,}
\newcommand{\D}{\mathscr{D}}
\newcommand{\lb}{\langle}
\newcommand{\rb}{\rangle}
\begin{document}

\title[Regularity of Gaussian white noise]{Regularity of Gaussian white noise on the $d$-dimensional torus}

\author{Mark Veraar}
\address{Delft Institute of Applied Mathematics\\
Delft University of Technology \\ P.O. Box 5031\\ 2600 GA Delft\\The
Netherlands} \email{M.C.Veraar@tudelft.nl}

\keywords{Gaussian white noise, Gaussian processes, Besov spaces, Sobolev spaces, path regularity, Fourier-Besov spaces}

\subjclass[2000]{Primary: 60G15; Secondary: 46E35, 60H40, 60G17}

\date\today

\thanks{The author was supported by a VENI subsidy 639.031.930
of the Netherlands Organisation for Scientific Research (NWO)}


\begin{abstract}
In this paper we prove that a Gaussian white noise on the $d$-dimensional torus has paths in the Besov spaces $B^{-d/2}_{p,\infty}(\T^d)$ with $p\in [1, \infty)$. This result is shown to be optimal in several ways. We also show that Gaussian white noise on the $d$-dimensional torus has paths in a the Fourier-Besov space $\hat{b}^{-d/p}_{p,\infty}(\T^d)$. This is shown to be optimal as well.
\end{abstract}

\maketitle

\section{Introduction}

In \cite{Bourgain} it has been proved that the Gibbs measures are invariant for the nonlinear Schr\"odinger equation. Building on these ideas, in \cite{QuVa} and also in \cite{Oh} it has been shown that the mean zero Gaussian white noise on the torus $\T$ is invariant for the periodic Korteweg-de Vries equation (KdV). To prove this one needs that (KdV) is well-posed for initial conditions from function spaces with a negative smoothness index, such as Sobolev spaces $H^{s,p}(\T)$ with $s<0$ and $p\in [1, \infty]$ and other classes of function spaces. Here a negative smoothness index $s$ is needed, because it is well-known Gaussian white noise is supported on $\bigcap_{s<-1/2} H^{s,p}\setminus H^{-1/2,p}$. It seems that the first results on the support of Gaussian white noise into this direction have been obtained in \cite{ReRo} for $p=2$ and in \cite{Kus} for other values of $p$. Note that both \cite{ReRo} and \cite{Kus} consider processes on $\R^d$ instead of the torus.

In many instances, Besov spaces are the right class of function spaces in order to prove sharp results. This is also the case for regularity results for paths of Brownian motion and other classical processes. Sharp results for such processes have been proved for instance in \cite{Cie2, CKR, Roy} using an equivalent wavelet definition of Besov spaces. In particular, in these papers it has been shown that Brownian motion $B:[0,1]\times\O\to \R$ satisfies
\[\P\big(B\in B^{1/2}_{p,\infty}(0,1)\big) = 1.\]
In \cite{HV, VerCor} this has been proved directly from the $L^p$-incremental definition of Besov spaces.
Formally, one could say that white noise in dimension one is given by $\dot{B}$, and therefore, Gaussian white noise is in $B^{-1/2}_{p,\infty}$ almost surely. In this paper we combine some of the ideas in \cite{CKR,HV} with Fourier analysis to obtain sharp results on the regularity of Gaussian white noise.


It might be helpful for non-experts to recall some elementary embedding results for Besov spaces and Sobolev spaces. Here we follow the standard notations as in \cite{TrSch,Tri}. Of course one has that $B^{s}_{2,2} = H^{s,2}$. This is no longer true for $B^{s}_{p,2}$ and $H^{s,p}$ with $p\neq 2$. One has the following embedding results (see \cite[2.3.3]{Tri} for $\R^d$ and \cite[Chapter 3]{TrSch} for $\T^d$)
\begin{equation}\label{eq:elememb}
\begin{aligned}
B^{s}_{p,2} & \hookrightarrow H^{s,p}\hookrightarrow B^s_{p,p} \ \ \text{ if $p>2$,  }
\\ B^s_{p,p} & \hookrightarrow  H^{s,p} \hookrightarrow B^{s}_{p,2} \ \  \text{ if $1\leq p<2$,}
\end{aligned}
\end{equation}
and, for any $\epsilon>0$ and $p,q,r,s\in [1, \infty]$,
\[B^{s+\epsilon}_{p,q}\hookrightarrow H^{s,r} \hookrightarrow B^{s-\epsilon}_{p,q}.\]

In the paper we consider the following question:
\begin{itemize}
\item On which Besov spaces is the $d$-dimensional Gaussian white noise $W:\O\to \T^d$ supported?
\end{itemize}
Our main result is that for all $p\in [1, \infty)$ one has
\[W\in B^{-d/2}_{p,\infty}(\T^d) \text{ almost surely.} \]
Moreover, we show that this is optimal in several ways. In particular, the known results on Sobolev spaces $H^{s,p}(\T^d)$ can easily be derived from our results.

Let us go back to the approach in \cite{Oh} to study the (KdV). In order to prove well-posedness of the (KdV) with a white noise initial condition, a new class of function space is introduced which is a Fourier-type Besov space denoted by $\hat{b}^s_{p,q}(\T)$ (see Section \ref{sec:FourierBesov}). An important step in the proof in \cite{Oh} of the invariance of Gaussian white noise for (Kdv) is that Gaussian white noise satisfies $W\in \hat{b}^s_{p,\infty}(\T)$ almost surely for all $s<-1/p$ and all $p\in [1, \infty)$. It is natural to ask what the optimal exponents $(s,p,q)$ are for which $W\in \hat{b}^s_{p,\infty}(\T)$ almost surely. Our main result here is that for all $p\in [1, \infty)$ one has
\[W\in \hat{b}^{-d/p}_{p,\infty}(\T) \text{ almost surely.}\]
Again this is optimal in several ways.

\medskip

\noindent {\em Acknowledgment} -- The author thanks Jan van Neerven for helpful comments.

After posting this paper on ArXiv,  \'Arp\'ad B\'enyi and Tadahiro Oh kindly pointed out that there is some overlap with their paper  \cite{BT}, in which similar techniques are used to characterize the exponents $(s,p,q)$ for which a Brownian motion on $\T$ is in $B^{s}_{p,q}(\T)$ and in $\hat{b}^{s}_{p,q}(\T)$. In fact, in dimension $d=1$ some (but not all) of our results could alternatively be deduced from theirs.

\section{Preliminaries}

We will write $a\lesssim b$ if there exists a universal constant
$C>0$ such that $a\leq Cb$, and $a\eqsim b$ if $a\lesssim b\lesssim
a$. If the constant $C$ is allowed to depend on some parameter $t$,
we write $a\lesssim_t b$ and $a\eqsim_t b$ instead.

\subsection{Besov spaces of periodic functions\label{sec:Besov}}
Let $\phi\in C^\infty(\R^d)$ be a fixed nonnegative function
$\phi$ with support in $\{t\in\R^d: \ \tfrac12\le
|t|\le 2\}$ and which satisfies
$$ \sum_{j\in\Z} \phi(2^{-j}t) =1 \quad\hbox{for $t\in \R^d\setminus\{0\}$}.$$
Additionally assume that $\phi(x) =1$ for all $2^{-1/2}\leq |x|\leq 2^{1/2}$.

Define the sequence $({\varphi_j})_{j\ge 0}$ in $C^\infty(\R^d)$ by
\[{\varphi_j}(t) = \phi(2^{-j}t) \quad \text{for}\ \  j=1,2,\dots \quad
\text{and} \ \ {\varphi_0}(t) = 1- \sum_{k\ge 1} {\varphi_j}(t), \quad
\xi\in\R^d.\]
Then all the functions $\varphi_j$ have compact supports.

Let $\T^d = [-\pi,\pi]^d$. Let $\F:L^2(\T^d)\to \ell^2(\Z^d)$ denote the Fourier transform, i.e.
\[(\F f)(k)  = \hat{f}(k) = (2\pi)^{-d}\int_{\T^d} f(x) e^{-ik\cdot x}\, dx,  \ \ f\in L^2(\T^d)\]

The space $\mathscr{D}(\T^d)$ is the space of complex-valued infinitely
differentiable functions on $\T^d$. On $\mathscr{D}(\T^d)$ one can define the
seminorms
\[\|f\|_{\alpha} = \sup_{x\in \T^d} |D^\alpha f(x)|,\]
where $\alpha = (\alpha_1, \ldots, \alpha_d)$ is a multiindex, and in this way
$\mathscr{D}(\T^d)$ is a locally convex space. Its dual space
$\mathscr{D}'(\T^d)$ is called the space of distributions. In particular, one has $g\in \mathscr{D}'(\T^d)$ if and only if there is a $N\in \N$ and a $c>0$ such that
\[|\lb f, g\rb| \leq  c \sum_{|\alpha|\leq N}|f\|_{\alpha}.\]
For details we refer to  \cite[Section 3.2]{TrSch}. In particular, recall the following two facts which we will not need, but are useful to support the intuition.
\begin{itemize}
\item Any function $f\in\mathscr{D}(\T^d)$ can be represented as
\[f(x) = \sum_{k\in \Z^d} a_k e^{ik\cdot x} \text{  in  } \mathscr{D}(\T^d),\]
with $(a_k)_{k\in \Z}$ scalars such that
\begin{equation}\label{eq:condakD}
\sup_{k\in \Z^d}(1+|k|)^m |a_k|<\infty \ \text{ for any $m\in \N$.}
\end{equation}
In this case, one has $a_k = \hat{f}(k)$ for each $k\in \Z^d$. Conversely, if $(a_k)_{k\in \Z^d}$ satisfies \eqref{eq:condakD}, then $\sum_{k\in \Z^d} a_k e^{ik\cdot x}$ converges in $\mathscr{D}(\T^d)$.
\item Any distribution $g\in\mathscr{D}'(\T^d)$ can be represented as
\[g = \sum_{k\in \Z^d} a_k e^{ik\cdot x} \text{  in  } \mathscr{D}'(\T^d),\]
with $(a_k)_{k\in \Z}$ scalars such that
\begin{equation}\label{eq:condakDprime}
\sup_{k\in \Z^d}(1+|k|)^{-m} |a_k|<\infty \ \text{ for some $m\in \N$.}
\end{equation}
In this case, one has $a_k = \hat{g}(k)$ for each $k\in \Z^d$. Conversely, if  if $(a_k)_{k\in \Z^d}$ satisfies \eqref{eq:condakDprime}, then $\sum_{k\in \Z^d} a_k e^{ik\cdot x}$ converges in $\mathscr{D}'(\T^d)$.
\end{itemize}

For a distribution $f\in \mathscr{D}'(\T^d)$
let $f_j\in \mathscr{D}(\T^d)$ be given by
\[f_j(x) = \sum_{k\in \Z^d} \varphi_j(k) \hat{f}(k) e^{ik\cdot x},  \ \ j\in \N.\]
By the properties of $(\varphi_j)_{j\geq 0}$ the series in $j$ has only finitely
many nonzero terms.
Observe that
\begin{equation}\label{eq:fjalsconv}
f_j(x) = \check{\varphi}_j * f(x) = \lb \check{\varphi}_{j,x}, f\rb
\end{equation}
where $\check{\varphi}_j(x) = \sum_{k\in \Z^d} \varphi_j(k) e^{ik\cdot x}$ and $\check{\varphi}_{j,x}(y) = \check{\varphi}_{j}(x-y)$.

\begin{definition}[Periodic Besov spaces]
Let $p,q\in [1, \infty]$.
Let
\[\|f\|_{B^s_{p,q}(\T^d)} := \Big(\sum_{j\geq 0} 2^{sjq} \| f_j |_{L^p(\T^d)}^q \Big)^{1/q},\]
if $q<\infty$, and
\[\|f\|_{B^s_{p,\infty}(\T^d)} := \sup_{j\geq 0}2^{sj}  \|f_j \|_{L^p(\T^d)}\]
if $q=\infty$.
The {\em Besov space $B^s_{p,q}(\T^d)$} is the space of all distributions $f\in \mathscr{D}'(\T^d)$ such that
$\|f\|_{B^s_{p,q}(\T^d)}<\infty$.
\end{definition}
One can show that the definition of $B^s_{p,\infty}(\T^d)$ does not depend on the
choice of $\phi$. Moreover, with two different functions $\phi$, the
corresponding norms in $B^s_{p,\infty}(\T^d)$ are equivalent (see
\cite[Section 3.5.1]{TrSch}).

\subsection{Vector-valued Gaussian random variables}

Let $(\O, \A, \P)$ be a probability space. Recall that $\gamma:\O\to \R$ is a {\em complex standard Gaussian random variable} if \[\gamma = \gamma_{\rm Re}/\sqrt{2}+i \gamma_{\rm Im}/\sqrt{2},\] where $\gamma_{\rm Re}$ and $\gamma_{\rm Im}$ are independent real standard Gaussian random variables (see \cite[Chapter 5]{Kal} for the definition of real Gaussian random variables). Let  $(\gamma_n)_{n=1}^N$ be a sequence of independent complex standard Gaussian. It is easy to check that the distribution $(\gamma_1, \ldots, \gamma_N)$  is invariant under unitary transformations and $\sum_{n=1}^N a_n \gamma_n$ has the same distribution as $\|a\|_2 \gamma_1$, where $\|a\|_2 = \Big(\sum_{n=1}^N |a_n|^2\Big)^{1/2}$. In particular,
\begin{equation}\label{eq:complexGausprop}
\Big(\E\Big|\sum_{k\geq 1} \gamma_n a_n\Big|^p\Big)^{1/p} = \|\gamma_1\|_{L^p(\Omega)} \Big(\sum_{n\geq 1} |a_n|^2\Big)^{1/2}.
\end{equation}
Let $\K$ be either $\R$ or $\C$. A random variable $\gamma:\O\to \K$ is called a {\em (complex) Gaussian random variable} if $\gamma\in L^2(\O)$ and $\E(\gamma)=0$ and $\gamma/(\E|\gamma|^2)^{1/2}$ is a (complex) standard Gaussian random variable. Note that all our Gaussian random variables are centered by definition.

Let $X$ be a (complex) Banach space. A strongly measurable mapping $\xi:\O\to X$ is called a {\em (complex) Gaussian random variable} if for all $x^*\in X^*$, $\lb \xi, x^*\rb$ is a (complex) Gaussian random variable. Observe that if $X$ is a complex Banach space, then we can consider $X$ as a real Banach space and denote this by $X_{\R}$. Let $X'$ be the dual space of $X_{\R}$. One can show that for every function $x'\in X'$ there exists a unique $x^*\in X^*$ such that $x' = {\rm Re}(x^*)$. Moreover, one can define $x^*:X\to \C$ as $\lb x,x^*\rb = \lb x, x'\rb - i \lb i x, x'\rb$. Now if $\xi:\O\to X$ is a complex Gaussian random variable, it follows that $\xi$ can be viewed as a real Gaussian random variable with values in $X_{\R}$. Indeed, for any $x'\in X'$, let $x^*\in X^*$ be as before. Then one has $\lb \xi, x'\rb = {\rm Re}(\lb \xi, x^*\rb)$, and the latter is a real Gaussian random variable.

From the above discussion one can deduce that all results on real Gaussian random variables, have a complex version. Of particular interest is the weak variance of a complex Gaussian random variable.
Define the {\em complex and real weak variances} by
\[\sigma_\R(\xi) = \sup\{(\E|\lb \xi,x'\rb|^2)^{1/2}: x'\in X', \|x'\|\leq 1\},\]
\[\sigma_\C(\xi) = \sup\{(\E|\lb \xi,x^*\rb|^2)^{1/2}: x^*\in X^*, \|x^*\|\leq 1\}.\]
\begin{lemma}\label{lem:complexgaus}
Let $\xi:\O\to X$ be a complex Gaussian. Then
\begin{align*}
\sigma_\C^2(\xi) = \frac12 \sigma_{\R}^2(\xi).
\end{align*}
\end{lemma}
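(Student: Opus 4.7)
The plan is to combine the canonical isometric bijection $X\s \leftrightarrow X'$ (where $x\s$ corresponds to $x' := \Re x\s$, as sketched in the preliminaries) with a one-line scalar computation. The scalar input will be the identity
\[
\E|\Re Z|^2 = \tfrac12 \E|Z|^2
\]
valid for any complex Gaussian $Z$. This follows immediately from the definition of a complex standard Gaussian $\gamma = \gamma_{\rm Re}/\sqrt{2} + i \gamma_{\rm Im}/\sqrt{2}$: writing $Z = (\E|Z|^2)^{1/2}\gamma$, the real part equals $(\E|Z|^2)^{1/2}\gamma_{\rm Re}/\sqrt{2}$, whose second moment is $\tfrac12 \E|Z|^2$.

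Before taking suprema I would first pin down that the correspondence $x\s \mapsto \Re x\s$ is genuinely an isometric bijection $X\s \to X'$. The inequality $\|\Re x\s\|_{X'} \le \|x\s\|_{X\s}$ is trivial from $|\Re z| \le |z|$. For the reverse I would use the standard rotation trick: given $x \in X$ with $\|x\| \le 1$, choose $\theta \in [0,2\pi)$ so that $e^{i\theta}\lb x, x\s\rb$ is real and nonnegative; then $\lb e^{i\theta}x, \Re x\s\rb = |\lb x, x\s\rb|$ and $\|e^{i\theta}x\| \le 1$, which gives $\|\Re x\s\|_{X'} \ge \|x\s\|_{X\s}$ after taking the supremum over $x$.

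Applying the scalar identity to $Z = \lb \xi, x\s\rb$ (which is complex Gaussian by the defining property of $\xi$) and using $\lb \xi, x'\rb = \Re \lb \xi, x\s\rb$ then yields
\[
\E|\lb \xi, x'\rb|^2 = \tfrac12 \E|\lb \xi, x\s\rb|^2.
\]
Taking suprema over the corresponding unit balls, matched by the isometry of the previous paragraph, will produce the relation between $\sigma_\C^2(\xi)$ and $\sigma_\R^2(\xi)$ asserted in the lemma. There is no substantive obstacle; the whole content of the identity is the factor $\tfrac12$ arising from the normalisation $\gamma = \gamma_{\rm Re}/\sqrt{2} + i\gamma_{\rm Im}/\sqrt{2}$ of the complex standard Gaussian, and the duality identification $X\s \to X'$, $x\s \mapsto \Re x\s$, is the mechanism that transports this pointwise identity into an identity between the two weak variances.
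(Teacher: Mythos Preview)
Your proposal is correct and follows essentially the same route as the paper's proof: both reduce to the scalar identity $\E|\Re Z|^2 = \tfrac12 \E|Z|^2$ for a complex Gaussian $Z$, then transfer it to the weak variances via the norm-preserving bijection $x^*\mapsto \Re x^*$ between $X^*$ and $X'$. You supply the rotation argument for the isometry explicitly, whereas the paper simply quotes $\|x^*\|=\|x'\|$ from the preliminary discussion, but the logical structure is identical.
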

\begin{proof}
Given $x^*\in X^*$ we can write $\lb \xi, x^*\rb = 2^{-1/2} a (\gamma_1 + i \gamma_2)$ with $a\geq 0$ and $\gamma_1, \gamma_2$ independent standard Gaussian random variables. Then one has $\E|\lb \xi, x^*\rb|^2 = a^2$. Letting $x' = \Re(x^*)$, one has $\|x^*\| = \|x'\|$ and $\E|\Re(\lb \xi, x'\rb)|^2 = a^2/2$. This proves $\sigma_\C^2(\xi) \leq \frac12 \sigma_{\R}^2(\xi)$. To prove the converse let $x'\in X^*$, and define $x^*:X\to \C$ as in the discussion before the lemma. Then $x^*\in X^*$ with $\Re(x^*) = x'$, hence $\|x^*\| = \|x'\|$. The first part implies $\sigma_\C^2(\xi) \geq \frac12 \sigma_{\R}^2(\xi)$, and the result follows.
\end{proof}

The following result follows from immediately from the real setting in \cite{HV} and Lemma \ref{lem:complexgaus}. It is a crucial ingredient in the proof of our main result Theorem \ref{thm:main}.
\begin{proposition}\label{prop:HV}
Let $X$ be a complex Banach space. Let $(\xi_n)_{n\geq 1}$ be an $X$-valued
centered multivariate complex Gaussian random variables with first
moments $(m_n)_{n\geq 1}$ and {\em complex} weak variances $(\sigma_n)_{n\geq 1}$.
Let $m=\sup_{n\geq 1}m_n$. Then
\[\begin{aligned}
\E\sup_{n\geq 1} \|\xi_n\| \leq m + 3\sqrt{2} \rho_{\Theta}((\sigma_{n})_{n\geq
1}).
\end{aligned}\]
\end{proposition}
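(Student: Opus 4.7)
The plan is to reduce directly to the real-valued version of the estimate in \cite{HV} using the realification procedure already introduced in the paragraph preceding Lemma \ref{lem:complexgaus}. Everything about $\xi_n$ that matters for the conclusion (its norm, its first moment, its Gaussianity, and its weak variance) transfers cleanly between the complex and real views of $X$, so the only genuine work is bookkeeping the factor $\sqrt{2}$ coming from Lemma \ref{lem:complexgaus}.

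First I would verify that $(\xi_n)_{n\geq 1}$, regarded as a sequence of random variables with values in the real Banach space $X_\R$, is a centered multivariate \emph{real} Gaussian sequence. For any $x'\in (X_\R)^*$ the construction recalled above Lemma \ref{lem:complexgaus} produces an $x^*\in X^*$ with $\Re x^* = x'$ and $\|x^*\|=\|x'\|$, so $\langle \xi_n,x'\rangle = \Re\langle \xi_n,x^*\rangle$ is a real Gaussian; applying the same observation coordinatewise to finite families of functionals acting on finite tuples $(\xi_{n_1},\dots,\xi_{n_k})$ shows that the whole sequence is multivariate Gaussian in $X_\R$. Moreover, since the norm on $X_\R$ coincides with that on $X$, the first moments $m_n = \E\|\xi_n\|$ and the supremum $\sup_{n\geq 1}\|\xi_n\|$ are unchanged, while the correspondence $x'\leftrightarrow x^*$ between unit functionals combined with Lemma \ref{lem:complexgaus} gives $\sigma_\R(\xi_n) = \sqrt{2}\,\sigma_\C(\xi_n) = \sqrt{2}\,\sigma_n$.

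Next I would invoke the real-valued result of \cite{HV} for the $X_\R$-valued Gaussian sequence, which yields
\[
\E\sup_{n\geq 1}\|\xi_n\| \;\leq\; m + 3\,\rho_\Theta\bigl((\sigma_\R(\xi_n))_{n\geq 1}\bigr).
\]
Substituting $\sigma_\R(\xi_n) = \sqrt{2}\,\sigma_n$ and using the positive homogeneity of $\rho_\Theta$ (which scales linearly in its argument, being an Orlicz-type functional) gives $\rho_\Theta((\sqrt{2}\sigma_n)) = \sqrt{2}\,\rho_\Theta((\sigma_n))$, and hence the desired bound $m + 3\sqrt{2}\,\rho_\Theta((\sigma_n)_{n\geq 1})$.

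The proof is essentially bookkeeping; the only mild subtlety is ensuring that the notion of \emph{multivariate} Gaussian survives the passage from $X$ to $X_\R$, which is handled by the $x'\leftrightarrow x^*$ correspondence applied jointly rather than functional-by-functional. All the real analytic content is in \cite{HV}, and the complex-to-real variance ratio is in Lemma \ref{lem:complexgaus}; there is no genuine obstacle.
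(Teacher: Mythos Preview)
Your proposal is correct and is exactly the approach the paper takes: the paper's entire proof is the one-line remark that the result ``follows immediately from the real setting in \cite{HV} and Lemma \ref{lem:complexgaus},'' and you have simply written out that reduction in detail, including the realification of the Gaussian sequence and the factor $\sqrt{2}$ coming from $\sigma_\R = \sqrt{2}\,\sigma_\C$ together with the homogeneity of the Luxemburg norm $\rho_\Theta$.
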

Here $\rho_{\Theta}(a_n)_{n\geq1})$ denotes the Luxemburg norm in the Orlicz space $\ell^{\Theta}$, where $\Theta:\R_+\to \R_+$ is given by $\Theta(0)=0$ and
\[\Theta(x) = x^2 \exp\Big(-\frac{1}{2x^2}\Big) \ \ \text{ for } x>0.\]
We refer to \cite{HV} and references therein for details. For the purposes below it is sufficient to recall that (see \cite[Example 2.1]{HV}) if $a_n = \alpha^n$ with $\alpha\in [1/2,1)$, then
\begin{equation}\label{eq:rhoan}
 \rho_{\Theta}((a_n)_{n\geq 1}) \eqsim \sqrt{\log[(1-\alpha)^{-1}]}.
\end{equation}
Moreover, if $a_n = \alpha^n$ with $\alpha\in (0,1/2)$, then $a_n \leq 2^{-n}$ and therefore \eqref{eq:rhoan} yields
\begin{equation}\label{eq:rhoan2}
 \rho_{\Theta}((a_n)_{n\geq 1}) \leq \rho_{\Theta}((2^{-n})_{n\geq 1}) \eqsim \sqrt{\log(2)}.
\end{equation}

\begin{remark}
In the case that the elements $(\xi_n)_{n\geq 1}$ are independent, $\E\sup_{n\geq 1} \|\xi_n\|$ is equivalent to $m+\rho_{\Theta}((\sigma_{n})_{n\geq 1})$. Moreover, the independence assumption can even be weakened (see \cite{VerCor}).
\end{remark}

Finally, recall that $M>0$ is {\em a median} of a centered (complex) Gaussian random variable $\xi:\O\to X$ if 
\[\P(\|\xi\|\leq M)\geq 1/2 \ \ \text{and} \ \ \P(\|\xi\|\geq M)\geq 1/2.\]

\section{White noise and Besov spaces on $\T^d$}
Let $(\O,\A,\P)$ be a probability space.

\begin{definition}[White noise]
A random variable $W:\O\to \D'(\T^d)$ is a called {\em Gaussian white noise} if
\begin{enumerate}
\item for each $f\in \D(\T^d)$, the random variable $\omega\mapsto \lb f, W(\omega)\rb$ is a complex Gaussian random variable,
\item for all $f,g\in \D(\T^d)$ one has
\[\E(\lb f,W\rb \overline{\lb g,W\rb}) = (f,g)_{L^2(\T^d)}.\]
\end{enumerate}
\end{definition}

If $W:\O\to \D'(\T^d)$ is a Gaussian white noise, then for all $f\in \D(\T^d)$ one has
\[\|\lb f,W\rb\|_{L^2(\O)} = \|f\|_{L^2(\T^d)}.\]
Therefore, the mapping $f\mapsto \lb f,W\rb$ uniquely extends to a bounded linear operator $\mathcal{W}: L^2(\T^d)\to L^2(\O)$. Moreover, $\mathcal{W}$ satisfies
\begin{enumerate}
\item for each $f\in L^2(\T^d)$, the random variable $\mathcal{W} f$ is a complex Gaussian random variable,
\item for all $f,g\in \D(\T^d)$ one has
\[\E(\mathcal{W}(f) \overline{\mathcal{W}(g)} ) = (f,g)_{L^2(\T^d)}.\]
\end{enumerate}
Indeed, since $\D(\T^d)$ is dense in $L^2(\T^d)$, (1) follows from the standard fact that the $L^2(\O)$-limit of a sequence of complex Gaussian random variables is again a complex Gaussian random variable, and (2) follows by an approximation argument.


The following lemma is obvious from the properties (1) and (2) and the fact that a complex Gaussian vector is determined by its covariance structure.
\begin{lemma}\label{lem:equidistr}
Assume $V,W:\O\to \D'(\T^d)$ are both Gaussian white noises with corresponding operators $\mathcal{V}, \mathcal{W}: L^2(\T^d)\to L^2(\O)$. Then $\big(\mathcal{V} f: f\in L^2(\T^d)\big)$ and $\big(\mathcal{W} f: f\in L^2(\T^d)\big)$ are identically distributed.
\end{lemma}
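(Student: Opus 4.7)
The statement is an equidistribution claim for two families of complex Gaussian random variables indexed by $L^2(\T^d)$. My plan is to reduce it to a statement about finite-dimensional marginals and then invoke the fact that a (circularly symmetric) complex Gaussian vector is determined by its Hermitian covariance matrix.

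First, I would fix an arbitrary finite collection $f_1,\ldots,f_n\in L^2(\T^d)$ and show that the $\C^n$-valued random vector $(\mathcal{V} f_1,\ldots,\mathcal{V} f_n)$ is jointly complex Gaussian in the sense of the paper. For any $c_1,\ldots,c_n\in\C$, linearity of $\mathcal{V}$ gives $\sum_{i=1}^n c_i\,\mathcal{V} f_i = \mathcal{V}\bigl(\sum_{i=1}^n c_i f_i\bigr)$, which is a scalar complex Gaussian by property (1) of $\mathcal{V}$. Thus every complex linear combination of the coordinates is a (centered) complex Gaussian in the sense of Section 2.2, so the vector is itself a centered complex Gaussian (this is the complex analog of the Cram\'er--Wold characterization, combined with the convention in the paper that complex Gaussians are circularly symmetric, i.e., $\gamma=\gamma_{\rm Re}/\sqrt{2}+i\gamma_{\rm Im}/\sqrt{2}$ up to scaling). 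The same argument applies verbatim to $(\mathcal{W} f_1,\ldots,\mathcal{W} f_n)$.

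Next, because both vectors are centered and circularly symmetric complex Gaussian, their laws are completely determined by the Hermitian covariance matrices $\bigl(\E(\mathcal{V} f_i\,\overline{\mathcal{V} f_j})\bigr)_{i,j}$ and $\bigl(\E(\mathcal{W} f_i\,\overline{\mathcal{W} f_j})\bigr)_{i,j}$. Property (2) of $\mathcal{V}$ and $\mathcal{W}$ (extended to all of $L^2(\T^d)$ by the approximation argument given in the paragraphs preceding the lemma) yields
\[
\E\bigl(\mathcal{V} f_i\,\overline{\mathcal{V} f_j}\bigr) \;=\; (f_i,f_j)_{L^2(\T^d)} \;=\; \E\bigl(\mathcal{W} f_i\,\overline{\mathcal{W} f_j}\bigr),
\]
so the two covariance matrices coincide. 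Hence $(\mathcal{V} f_1,\ldots,\mathcal{V} f_n)$ and $(\mathcal{W} f_1,\ldots,\mathcal{W} f_n)$ have the same distribution on $\C^n$.

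Finally, since the collection $(f_1,\ldots,f_n)$ was arbitrary, all finite-dimensional marginals of the processes $(\mathcal{V} f)_{f\in L^2(\T^d)}$ and $(\mathcal{W} f)_{f\in L^2(\T^d)}$ agree, which is exactly what it means for the two processes to be identically distributed. The only genuinely nontrivial point is the joint complex Gaussianity in the first paragraph, which (under the paper's circularly-symmetric convention) removes any concern about pseudo-covariances $\E(\mathcal{V} f_i\,\mathcal{V} f_j)$ contributing additional information; after that, the argument is a routine application of the covariance-characterization of Gaussian vectors.
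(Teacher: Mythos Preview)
Your argument is correct and is precisely the approach the paper indicates: the paper does not give a formal proof but remarks that the lemma ``is obvious from the properties (1) and (2) and the fact that a complex Gaussian vector is determined by its covariance structure.'' You have simply written out the details of that one-line justification --- joint complex Gaussianity via linearity, matching Hermitian covariances via property (2), and then equality of all finite-dimensional distributions.
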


For $k\in \Z^d$, let $e_k:\T^d\to \C$ be given by $e_k(x) = e^{ik\cdot x}$.
\begin{proposition}\label{prop:white}
Let $(\gamma_k)_{k\in \Z^d}$ be a sequence of independent standard complex Gaussian random variables. Let $W:\O\to \D'(\T^d)$ be defined by
$W = \sum_{k\in \Z^d} \gamma_k e_k$ in $\D'(\T^d)$, i.e. $\lb f, W\rb = \sum_{k\in \Z} \gamma_k \hat{f}(-k)$. Then $W$ is a Gaussian white noise.
\end{proposition}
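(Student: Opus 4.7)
The strategy has three steps: (a) show that $W$ is a well-defined random element of $\D'(\T^d)$; (b) verify Gaussianity of $\lb f, W\rb$ for each $f\in \D(\T^d)$; (c) verify the covariance identity.

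For (a), I will invoke the criterion recalled in the excerpt via \eqref{eq:condakDprime}: a Fourier series $\sum_k a_k e_k$ converges in $\D'(\T^d)$ as soon as $(1+|k|)^{-m}|a_k|$ is bounded for some $m\in\N$. Thus it suffices to produce such an $m$ almost surely for $a_k=\gamma_k$. Since each $\gamma_k$ is a complex standard Gaussian, the tail estimate
\[\P\big(|\gamma_k|\geq (1+|k|)^{d}\big)\leq C\exp\big(-c(1+|k|)^{2d}\big)\]
is summable over $k\in\Z^d$. Borel--Cantelli then yields an almost sure event on which $|\gamma_k|\leq (1+|k|)^{d}$ for all but finitely many $k$, so $m=d+1$ works. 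On this event, the partial sums $W_N=\sum_{|k|\leq N}\gamma_k e_k$ converge in $\D'(\T^d)$, and measurability of the limit follows from the measurability of each $W_N$ as a $\D'(\T^d)$-valued random variable.

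For (b), I use the fact that $\hat{f}(-k)$ decays faster than any polynomial because $f\in\D(\T^d)$. Pairing $W$ against $f$ (either via the Fourier representation $f=\sum_k \hat{f}(k)e_k$ with a Fubini-type exchange, or directly from the definition $\lb f,W\rb=\sum_k \gamma_k \hat{f}(-k)$ given in the statement), one sees that the series converges absolutely in the pathwise sense and in $L^2(\O)$. Each partial sum is a finite $\C$-linear combination of independent complex standard Gaussians and is therefore complex Gaussian, and by the closure property of complex Gaussians under $L^2(\O)$-limits (already used in the discussion following the definition of $\mathcal{W}$), the limit $\lb f, W\rb$ is complex Gaussian.

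For (c), using independence and $\E(\gamma_k\overline{\gamma_\ell})=\delta_{k\ell}$ together with dominated convergence in $L^2(\O)$,
\[\E\big(\lb f,W\rb\,\overline{\lb g,W\rb}\big)=\sum_{k\in\Z^d}\hat{f}(-k)\overline{\hat{g}(-k)}=(f,g)_{L^2(\T^d)},\]
the last identity being Parseval's theorem on $\T^d$ (invariant under $k\mapsto -k$). The only non-routine point in this program is step (a), which requires a single full-measure event, independent of $f$, on which $W$ acts continuously as a distribution; once this is secured, steps (b) and (c) are standard Parseval-type calculations.
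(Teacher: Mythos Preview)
Your proof is correct and follows the same three-step outline as the paper: well-definedness in $\D'(\T^d)$, Gaussianity, and the covariance computation. Steps (b) and (c) match the paper almost verbatim.

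The only genuine difference is in step (a). The paper invokes \cite[equation (3.7)]{LT} to obtain the sharp logarithmic bound
\[
\sup_{k\in\Z^d}\,|\gamma_k|\,(\log(|k|^d+1))^{-1/2}<\infty \quad\text{a.s.},
\]
and then verifies the continuity estimate $|\lb f,W(\omega)\rb|\leq C\xi(\omega)\sum_{|\alpha|\leq 2}\|f\|_\alpha$ by hand. You instead use Borel--Cantelli with the crude Gaussian tail bound to get $|\gamma_k|\leq(1+|k|)^d$ eventually, and then appeal to the Fourier-series criterion \eqref{eq:condakDprime} stated (but explicitly labeled ``not needed'') in the preliminaries. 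Your route is more elementary and fully self-contained, avoiding the external reference; the paper's route gives a much tighter growth rate on the coefficients, though that extra precision is not used anywhere in the proposition. Either argument is perfectly adequate here.
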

\begin{proof}
By \cite[equation (3.7)]{LT} there is a set $\O_0\in \A$ with $\P(\O_0)=1$ such that
\[\xi : = \sup_{k\in \Z^d} |\gamma_k|(\log(|k|^d+1))^{-1/2} = \sup_{n\geq 0} \sup_{|k|=n} |\gamma_k|(\log(n^d+1))^{-1/2}<\infty\]
on $\O_0$.  Therefore, for any $f\in \D(\T^d)$ and $\omega\in\O_0$ one has
\begin{align*}
|\lb f, W(\omega)\rb| & = \Big|\sum_{k\in \Z} \hat{f}(-k) \gamma_k(\omega)\Big| \leq \sum_{k\in \Z} |\hat{f}(-k)| |\gamma_k(\omega)|
\\ & \leq \xi(\omega) \sum_{n\geq 0}\sum_{k\in \Z} (\log(|k|^d+1))^{1/2} |\hat{f}(-k)|
\\ & \leq \xi(\omega) c_2 \sum_{k\in \Z} (\log(|k|^d+1))^{1/2} (|k|+1)^{-2} \leq C \xi(\omega).
\end{align*}
where $c_2 = \sup_{k\in \Z^d}(1+|k|)^2 |\hat{f}(-k)|$ and $C = c_2 \sum_{k\in \Z} (\log(|k|^d+1))^{1/2} (|k|+1)^{-2}$. It is well-known that $c_2\leq K \sum_{|\alpha|\leq 2} \|f \|_{\alpha}$ for some constant $K$ (see \cite[Theorem 3.2.9]{GrafClas}). Therefore, it follows that $W(\omega)\in \D'(\T^d)$ for all $\omega\in \O_0$. To see that it is a Gaussian white noise, note that $\lb f, W\rb$ is a complex Gaussian and \[\E(\mathcal{W}(f) \overline{\mathcal{W}(g)} )  = \sum_{k\in \Z} \sum_{j\in \Z} \E  (\gamma_k \overline{\gamma_j}) \hat{f}(-k) \overline{\hat{g}(-k)} =  \sum_{k\in \Z} \hat{f}(-k) \overline{\hat{g}(-k)} = (f,g)_{L^2(\T^d)}.\]
\end{proof}

\begin{theorem}\label{thm:main}
Let $W:\O\to \D'(\T^d)$ be a Gaussian white noise.
\begin{enumerate}
\item For all $p\in [1, \infty)$ one has $\displaystyle \P\big(W\in B^{-d/2}_{p,\infty}(\T^d)\big) = 1$.
\item For all $p\in [2, \infty)$ there exists a constant $c_{p,d}$ such that
\[\P\big(\|W\|_{B^{-d/2}_{p,\infty}(\T^d)}\geq c_{p,d}\big) =1.\]
\item For any $p,q\in [1, \infty]$ and $s<-d/2$ one has
$\displaystyle \P\big(W\in B^{s}_{p,q}(\T^d)\big)=1$.
\item For any $p\in [1, \infty]$ and $q\in [1, \infty)$ one has
$\displaystyle \P\big(W\in B^{-d/2}_{p,q}(\T^d)\big)=0$.
\item For any $p,q\in [1, \infty]$ and $s>-d/2$ one has
$\displaystyle P\big(W\in B^{s}_{p,q}(\T^d)\big)=0$.
\item One has $\displaystyle \P\big(W\in B^{-d/2}_{\infty,\infty}(\T^d)\big)=0$.
\end{enumerate}
\end{theorem}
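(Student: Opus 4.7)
The approach is to realize $W$ concretely via the random Fourier series $W=\sum_{k\in\Z^d}\gamma_k e_k$ from Proposition~\ref{prop:white}, so that the Littlewood--Paley blocks are random trigonometric polynomials
\[W_j(x)=\sum_{k\in\Z^d}\varphi_j(k)\gamma_k e^{ik\cdot x},\qquad j\ge 0,\]
whose Fourier support has cardinality $\asymp 2^{jd}$ and satisfies $\sum_k|\varphi_j(k)|^2\asymp 2^{jd}$. All six statements then reduce to sharp upper and lower bounds on $\|W_j\|_{L^p(\T^d)}$, combined with Proposition~\ref{prop:HV} and Gaussian concentration.

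For (1), the plan is to apply Proposition~\ref{prop:HV} to $\xi_j:=2^{-jd/2}W_j$ in $L^p(\T^d)$. Fubini and pointwise complex Gaussian moments give $\E\|W_j\|_{L^p}^p=c_p|\T^d|\,2^{jdp/2}$, and Kahane--Khintchine then bounds $m_j=\E\|\xi_j\|\lesssim 1$. For the complex weak variance, pairing against $f\in L^{p'}$ with $\|f\|_{p'}\le 1$ gives $\E|\lb W_j,f\rb|^2\lesssim\sum_{k\in\supp\varphi_j}|\hat f(k)|^2$, which I would estimate via the trivial $L^{p'}\embed L^2$ when $p\le 2$ and via Hausdorff--Young followed by Hölder over the $\asymp 2^{jd}$ support points when $p\ge 2$, obtaining $\sigma_\C(\xi_j)\lesssim 2^{-jd/\max(p,2)}$. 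This is geometric in $j$ with ratio $<1$ whenever $p<\infty$, so \eqref{eq:rhoan}--\eqref{eq:rhoan2} keep the Orlicz norm finite and Proposition~\ref{prop:HV} delivers $\E\sup_j\|\xi_j\|<\infty$.

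The complementary a.s.\ lower bound $\|W_j\|_{L^p}\ge c_{p,d}\,2^{jd/2}$ for $j$ large drives (2), (4), and (5). For (2) with $p\in[2,\infty)$, Parseval gives $\|W_j\|_{L^2}^2=(2\pi)^d\sum_k|\varphi_j(k)|^2|\gamma_k|^2$; the rescaled sum $Z_j:=2^{-jd}\sum_k|\varphi_j(k)|^2|\gamma_k|^2$ has expectation bounded below by a positive constant and variance $O(2^{-jd})$, so Chebyshev plus Borel--Cantelli force $\liminf_j Z_j>0$ a.s., and the reverse Hölder $\|f\|_{L^p}\ge(2\pi)^{d(1/p-1/2)}\|f\|_{L^2}$ (valid since $p\ge 2$ and $|\T^d|\ge 1$) supplies $c_{p,d}$. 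For general $p\in[1,\infty)$ needed in (4)--(5), I would instead invoke Borell's inequality: the median of $\|W_j\|_{L^p}$ is $\asymp 2^{jd/2}$ while the concentration scale is the weak variance already estimated in (1), so $\P(\|W_j\|_{L^p}<M_j/2)\le\exp(-c\,2^{\delta j})$ for some $\delta>0$, summable in $j$, and Borel--Cantelli finishes. Then (4) is the divergence $\sum_j(2^{-jd/2}\|W_j\|_{L^p})^q=\infty$ for $q<\infty$, and (5) is $2^{js}\|W_j\|_{L^p}\ge c\,2^{j(s+d/2)}\to\infty$ whenever $s>-d/2$, the case $p=\infty$ in (5) reducing to $p<\infty$ via $\|W_j\|_{L^\infty}\ge(2\pi)^{-d/p}\|W_j\|_{L^p}$. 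Part (3) is immediate from the standard monotonicity $B^{-d/2}_{p,\infty}\embed B^s_{p,q}$ in $s$, after passing to a finite $p_0$ when $p=\infty$.

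The main obstacle is (6), where the correct a.s.\ lower bound is $\|W_j\|_{L^\infty}\gtrsim\sqrt{j}\,2^{jd/2}$, beating $2^{jd/2}$ by a logarithmic factor. My plan is Sudakov minoration for the real Gaussian process $X(x):=\Re W_j(x)$ on $\T^d$: the canonical metric satisfies $d_X(x,y)^2=2(C(0)-\Re C(x-y))$ with covariance $C(z)=\sum_k|\varphi_j(k)|^2 e^{ik\cdot z}$, and a Poisson-summation argument together with the Schwartz decay of $\phi^2$ forces $|C(z)|\ll C(0)\asymp 2^{jd}$ once $|z|\gtrsim 2^{-j}$. A lattice of $\asymp 2^{jd}$ points at spacing $c\cdot 2^{-j}$ is therefore $\Theta(2^{jd/2})$-separated in $d_X$, and Sudakov yields $\E\sup_x X(x)\gtrsim 2^{jd/2}\sqrt{jd}$. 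Since the $L^\infty$ weak variance of $W_j$ equals the pointwise standard deviation $\asymp 2^{jd/2}$, Borell's inequality gives $\P(\|W_j\|_{L^\infty}<\tfrac12\E\|W_j\|_{L^\infty})\le\exp(-cj)$, which is summable, and Borel--Cantelli then produces $\sup_j 2^{-jd/2}\|W_j\|_{L^\infty}=+\infty$ a.s. The delicate step throughout is the covariance-decay computation underlying the Sudakov separation bound; everything else is routine.
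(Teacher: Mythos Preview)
Your proposal is correct and for parts (1)--(3) tracks the paper's argument closely. The interesting divergences are in (4)--(5) for small $p$ and, especially, in (6).

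For (4) with $p\in[1,2)$ the paper does not use Borell's inequality. Instead it argues by contradiction via a $0$--$1$ law: if $\P(\|W\|_{B^{-d/2}_{p,q}}<\infty)=1$, then $(2^{-jd/2}W_j)_{j\ge 0}$ is a Gaussian random variable in $\ell^q(L^p)$ and hence has finite $q$-th moment, but Fubini and Kahane--Khintchine give $\E\|W_j\|_{L^p}^q\gtrsim 2^{jdq/2}$, so $\sum_j 2^{-jdq/2}\E\|W_j\|_{L^p}^q=\infty$, a contradiction. Your Borell--Borel--Cantelli route is a legitimate alternative and in some ways cleaner: it yields the pathwise lower bound $\|W_j\|_{L^p}\ge c\,2^{jd/2}$ for all large $j$ uniformly over $p\in[1,\infty)$, which then settles (5) directly without the embedding reduction $B^s_{p,q}\subseteq B^{-d/2}_{1,1}$ that the paper uses.

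For (6) the paper's argument is far simpler than your Sudakov--Borell scheme. One just evaluates at a single point,
\[\|W_j\|_{L^\infty}\ge |W_j(0)|=\Big|\sum_{k\in\Z^d}\varphi_j(k)\gamma_k\Big|,\]
and observes that the supports of $(\varphi_{3j})_{j\ge 1}$ are pairwise disjoint, so $(W_{3j}(0))_{j\ge 1}$ are \emph{independent} complex Gaussians with $\E|W_{3j}(0)|^2\ge c\,2^{3jd}$. Thus $(2^{-3jd/2}|W_{3j}(0)|)_{j\ge 1}$ is an independent sequence with variances bounded below, and its supremum is infinite almost surely by elementary considerations. Your approach does give the sharper quantitative bound $\|W_j\|_{L^\infty}\gtrsim\sqrt{j}\,2^{jd/2}$, but at the cost of the covariance-decay computation you correctly flag as the delicate step; for the bare statement (6) this machinery is unnecessary.
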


\begin{remark}\label{rem:4cases} Some remarks on the theorem:
\begin{enumerate}
\item[(i)] We do not know whether (2) holds for $p\in [1, 2)$ as well. Applying the Hausdorff Young's inequality instead of Parseval's identity is not sufficient here.


\item[(ii)] The proof shows that one can take $c_{2,d} = \sqrt{2^{3d/2} - 2^{-3d/2}}$ in (2).
\end{enumerate}
\end{remark}

\begin{proof}[Proof of Theorem \ref{thm:main}]
Before we present the proof we reduce to a particular choice of the Gaussian white noise.
Let $W = \sum_{k\in \Z^d} \gamma_k e_k$ in $\D'(\T^d)$. Then by Proposition \ref{prop:white}, $W$ is a Gaussian white noise. Let $V:\O\to \D'(\T^d)$ be an arbitrary white noise, and let $\mathcal{W}$ and $\mathcal{V}$ be as in Lemma \ref{lem:equidistr}. Fix $p,q\in [1, \infty]$. As in \eqref{eq:fjalsconv} let
\begin{equation}\label{eq:WjVj}
W_j(x) = \lb \varphi_{j,x}, W \rb = \mathcal{W}(\varphi_{j,x}) \text{  and  } V_j(x) = \lb \varphi_{j,x}, V \rb = \mathcal{V}(\varphi_{j,x}) \ \ j\in \N.
\end{equation}
Now one has
\begin{equation}\label{eq:BesovWV}\begin{aligned}
\|W\|_{B^{s}_{p,q}(\T^d)} &= \|(2^{js} W_j)_{j\geq 0}\|_{\ell^q(L^p(\T^d))}  \text{  and  }
\\ \|V\|_{B^{s}_{p,q}(\T^d)} & = \|(2^{js} V_j)_{j\geq 0}\|_{\ell^q(L^p(\T^d))}
\end{aligned}
\end{equation}
It follows from \eqref{eq:WjVj} and Lemma \ref{lem:equidistr} that $(W_j)_{j\geq 0}$ and $(V_j)_{j\geq 0}$ $(V_j(x): x\in \T^d, j\in \N)$ and $(W_j(x): x\in \T^d, j\in \N)$ are $\P$-identically distributed . Therefore, since each $V_j$ and $W_j$ is continuous, an approximation with Riemann sums shows that for all $n\in \N$, $(2^{js} \|W_j\|_{L^p(\T^d)})_{j=1}^n$  and $(2^{js} \|V_j\|_{L^p(\T^d)})_{j=1}^n$ are identically $\P$-distributed. Therefore,
\[\|(2^{js} \|W_j\|_{L^p(\T^d)})_{j=1}^n\|_{\ell^q_n} = \|(2^{js} \|V_j\|_{L^p(\T^d)})_{j=1}^n\|_{\ell^q_n} \text{  in $\P$-distribution}.\]
By \eqref{eq:BesovWV} and monotone convergence one obtains that
\[\|W\|_{B^{s}_{p,q}(\T^d)} = \|V\|_{B^{s}_{p,q}(\T^d)} \text{  in $\P$-distribution}.\]

The above shows that below it suffices to consider the situation where
\[W=\sum_{k\in \Z^d} \gamma_k e_k \text{ in $\D'(\T^d)$}.\]

(1): \
For each $j\geq 0$, one has
\[W_j(x) = \sum_{k\in \Z^d} \varphi_j(k) \gamma_k e^{ik\cdot x},  \ \ j\in \N.\]
To show that $W\in B^{-d/2}_{p,\infty}(\T^d)$ note that
\[\E \|W\|_{B^{-d/2}_{p,\infty}(\T^d)} = \E(\sup_{j\geq 0} 2^{-jd/2} \|W_j\|_{L^p(\T^d)})<\infty.\]
To estimate the latter note that by Proposition \ref{prop:HV} one has
\begin{equation}\label{eq:maxest}
\E(\sup_{j\geq 0} 2^{-jd/2} \|W_j\|_{L^p(\T^d)}) \leq  \sup_{j\geq 0} 2^{-jd/2} \E \|W_j\|_{L^p(\T^d)} + 3 \sqrt{2} \rho_{\Theta}((\sigma_j)_{j\geq 0}).
\end{equation}

By \eqref{eq:complexGausprop} one has
\begin{align*}
\E \|W_j\|_{L^p(\T^d)} & \leq  (\E \|W_j\|_{L^p(\T^d)}^p)^{1/p}  = \Big(\int_{\T^d} \|W_j(x)\|_{L^p(\O)}^p \, dx\Big)^{1/p}\\ &  = (2\pi)^{d/p} \|\gamma_1\|_{L^p(\O)} \Big(\sum_{k\in \Z^d} |\varphi_j(k) e_k|^2\Big)^{1/2}
\\ & = \|\gamma_1\|_{L^p(\O)} (2\pi)^{d/p} \Big(\sum_{k\in \Z^d} |\varphi_j(k)|^2\Big)^{1/2}.
\end{align*}
Since $\varphi_j(k) = 0$ if $|k|>2^{j+1}$ it follows that  $\E \|W_j\|_{L^p(\T^d)}\leq \|\gamma_1\|_{L^p(\O)} (2\pi)^{d/p} 2^{jd/2}$ and therefore
\[\sup_{j\geq 0} 2^{-jd/2} \E \|W_j\|_{L^p(\T^d)} \leq \|\gamma_1\|_{L^p(\O)} (2\pi)^{d/p}.\]

To estimate the complex weak variances $\sigma_j$ of $2^{-j d/2} W_j$, let $p'\in (1, \infty]$ be such that $\frac{1}{p} + \frac{1}{p'}=1$. Note that for any $f\in \D(\T^d)$ with $\|f\|_{L^{p'}(\T^d)}\leq 1$ one has
\[\E|\lb W_j, f\rb|^2 = \E \Big|\sum_{k\in \Z^d} \varphi_j(k) \gamma_k \hat{f}(-k)\Big|^2 = \sum_{k\in \Z^d} |\varphi_j(k)|^2 |\hat{f}(-k)|^2 \leq \sum_{|k|\leq 2^{j+1}} |\hat{f}(-k)|^2.\]
First assume $p\in [2, \infty)$. In that case by H\"older's inequality with $\frac12 = \frac1r + \frac1p$ and the Hausdorff-Young inequality we obtain
\begin{align*}
\Big(\sum_{|k|\leq 2^{j+1}} |\hat{f}(-k)|^2\Big)^{1/2} & \leq 2^{(j+3)d/r} \Big(\sum_{k\in \Z^d} |\hat{f}(-k)|^{p}\Big)^{1/p}\\ & \leq 2^{(j+3)d/r} \|f\|_{L^{p'}(\T^d)}\leq 2^{(j+3)d/r} = 2^{-(j+3)d/p} 2^{(j+3)d/2}.
\end{align*}
It follows that $\sigma_j \leq 2^{3d/2} 2^{-(j+3)d/p}$ if $p\in [2, \infty)$ and therefore by \eqref{eq:rhoan} and \eqref{eq:rhoan2},
\[\rho_{\Theta}((\sigma_j)_{j\geq 1}) \leq 2^{d/2-d/p} \rho_{\Theta}((2^{-jd/p})_{j\geq 0})<\infty.\]
Next assume $p\in [1, 2)$. Then by H\"older's inequality one has
\[\Big(\sum_{|k|\leq 2^{j+1}} |\hat{f}(-k)|^2\Big)^{1/2}\leq \|f\|_{L^2(\T^d)} \leq C_d \|f\|_{L^{p'}(\T^d)} = C_d,\]
where $C_d = (2\pi)^{\frac1p - \frac12}$
Therefore, $\sigma_j \leq C_d 2^{-jd/2}$ if $1\leq p<2$, and again $\rho_{\Theta}((\sigma_j)_{j\geq 1})<\infty$.
Now (1) follows from \eqref{eq:maxest}.


(2): \ Since $\|W\|_{B^{-d/2}_{p,\infty}(\T^d)}\geq (2\pi)^{d/p-d/2} \|W\|_{B^{-d/2}_{2,\infty}(\T^d)}$ it suffices to estimate $\|W\|_{B^{-d/2}_{2,\infty}(\T^d)}$ from below by a constant. For each $j\geq 1$ let
\[S_j = \{k\in \Z^d: 2^{j-\frac12}<|k|\leq 2^{j+\frac12}\}.\]
Then $\varphi_j= 1$ on $S_j$, and the $(S_j)_{j\geq 1}$ are pairwise disjoint.
By Parseval's identity one has
\begin{align*}
2^{-jd}\|W_j\|_{L^2(\T^d)}^2 & = 2^{-jd}  \sum_{k\in \Z} |\varphi_j(k) \gamma_k|^{2}
\geq 2^{-jd}  \sum_{k\in S_j} |\gamma_k|^{2}.
\end{align*}
We claim that $\lim_{j\to \infty} 2^{-jd}  \sum_{k\in S_j} |\gamma_k|^{2} = 2^{d/2} - 2^{-d/2}$ almost surely. Indeed, one can write
\begin{align*}
2^{-jd}  \sum_{k\in S_j} |\gamma_k|^{2} & = 2^{-jd}  \sum_{|k|\leq 2^{j+\frac12}} |\gamma_k|^{2} - 2^{-jd}   \sum_{|k|\leq 2^{j-\frac12}} |\gamma_k|^{2}
\\ & = 2^{3d/2} 2^{-(j+\frac32)d} \sum_{|k|\leq 2^{j+\frac12}} |\gamma_k|^{2} - 2^{-3d/2} 2^{-(j+\frac12)d}   \sum_{|k|\leq 2^{j-\frac12}} |\gamma_k|^{2},
\end{align*}
and by the strong law of large numbers the latter converges almost surely to $2^{3d/2} - 2^{-3d/2}$
Therefore, we can conclude that
\[\|W\|_{B^{1/2}_{2,\infty}(\T^d)} = \sup_{j\geq 0} 2^{-jd/2}\|W_j\|_{L^2(\T^d)}\geq \sqrt{2^{3d/2} - 2^{-3d/2}}.\]
almost surely.

%

(3): This is clear from the fact that $B^{-d/2}_{p,\infty}(\T^d)\subseteq B^{s}_{p,q}(\T^d)$ for any $s<-d/2$.

(4): \ From the proof of (2) one immediately sees that for all $p\in [2, \infty)$ and $q\in [1, \infty)$, $\|W\|_{B^{1/2}_{p,q}(\T^d)} = \infty$ almost surely. Indeed, let $N\in \A$ be the set of all $\omega\in \O$ such that $\|W(\omega)\|_{B^{1/2}_{p,q}(\T^d)} <\infty$. Then for all $\omega\in N$ one has $\displaystyle \lim_{j\to \infty} 2^{-jd/2}\|W_j(\omega)\|_{L^p(\T^d)}=0$. In (2) we have seen that $\displaystyle \lim_{j\to \infty} 2^{-jd/2}\|W_j\|_{L^p(\T^d)}\geq c_{p,d}$ on a set $\O_0$ of probability $1$. Therefore, $N\subset \O\setminus \O_0$, and hence $N$ is a zero set.

If $p=\infty$ and $q\in [1, \infty)$, then taking any $2\leq r<\infty$ one obtain that
$\|W\|_{B^{1/2}_{\infty,q}(\T^d)}\geq C_d \|W\|_{B^{1/2}_{r,q}(\T^d)} =\infty$ almost surely.

If $1\leq p\leq 2$ and $q\in [1, \infty)$ we cannot use (2) and we need to argue in a different way. Fix integers $n\geq m\geq 0$. For each $\omega\in \O$ one has $\|W(\omega)\|_{B^{1/2}_{p,q}(\T^d)}<\infty$ if and only if  $(2^{-jd /2} W_j(\omega))_{j=1}^\infty$ has a finite $\ell^q(L^p(\T^d))$-norm. The latter is equivalent to the convergence of $\sum_{j\geq 0} \xi_j(\omega)$, where  $\xi_j(\omega) = 2^{-jdq /2}\| W_j(\omega)\|_{L^p(\T^d)}^q$. Therefore, by a standard $0/1$ law argument (see \cite[Corollary 3.14]{Kal}) it follows that $\P(\|W\|_{B^{1/2}_{p,q}(\T^d)}<\infty)$ is either $0$ or $1$. Assume $\P(\|W\|_{B^{1/2}_{p,q}(\T^d)}<\infty)=1$. Then one can show that $(2^{-jd /2} W_j)_{j=1}^\infty$ is a vector-valued Gaussian random variable. In particular, it has finite $r$-th moment for all $r<\infty$ (see \cite[Corollary 3.2]{LT}).
By the Kahane-Khintchine inequality (see \cite[Corollary 3.2]{LT}) and (1) we obtain
\[(\E \|W_j\|_{L^p(\T^d)}^q)^{1/q} \eqsim_{p,q} (\E \|W_j\|_{L^p(\T^d)}^p)^{1/p} = \|\gamma_1\|_{L^p(\O)} (2\pi)^{d/p} \Big(\sum_{k\in \Z^d} |\varphi_j(k)|^2\Big)^{1/2}.\]
Since
\[\Big(\sum_{k\in \Z^d} |\varphi_j(k)|^2\Big)^{1/2} \geq \Big(\sum_{k\in S_{j}} 1 \Big)^{1/2}\eqsim 2^{jd}.\]
it follows that
\begin{align*}
\E \|(2^{-jd q/2} W_j)_{j=1}^\infty\|_{\ell^q(L^p(\T^d))}^q & = \sum_{j\geq 0} 2^{-jd q/2}\E \|W_j\|_{L^p(\T^d)}^q
\\ & \gtrsim_{p,q} \|\gamma_1\|_{L^p(\O)}^q (2\pi)^{dq/p} \sum_{j\geq 0} 2^{-jd q/2} 2^{ j d q/2}.
\end{align*}
Clearly, the latter series is infinite and this gives the desired contradiction.

(5): Fix $s>-d/2$. Since $B^{s}_{p,q}(\T^d)\subseteq B^{s}_{1,\infty}(\T^d)\subseteq B^{-d/2}_{1,1}(\T^d)$ for all $p,q\in [1, \infty]$, it suffices to show that $W\notin B^{-d/2}_{1,1}(\T^d)$ almost surely. However, this has already been proved in (4) and therefore (5) follows.

(6): Since each $W_j$ is continuous, one has
\[\|W_j\|_{L^\infty(\T)} = \|W_j\|_{C(\T^d)}\geq |W_j(0)|= 2^{-jd/2} \Big|\sum_{k\in \Z} \varphi_j(k)  \gamma_k\Big|.\]
Since the supports of $(\varphi_{3j})_{j\geq 1}$ are disjoint one has that $(W_{3j}(0))_{j\geq 1}$ are independent complex Gaussian random variables with values in $\R$. Moreover,
\begin{align*}
2^{-3jd}\E|W_{3j}(0)|^2 & = 2^{-3jd} \sum_{k\in \Z} |\varphi_{3j}(k)|^2 \geq 2^{-3jd} \sum_{2^{3j-\frac12} \leq |k|\leq 2^{3j+\frac12}} 1^2
\geq c,
\end{align*}
where $c$ is independent of $j$. Therefore,
\[\|W\|_{B^{-d/2}_{\infty,\infty}(\T^d)}^2 \geq \sup_{j\geq 1} 2^{-3jd} \|W_{3j}\|_{L^\infty(\T)}^2 \geq \sup_{j\geq 1} 2^{-3jd}  |W_{3j}(0)|^2 = \infty,\]
almost surely.
\end{proof}


The following result characterizes for which exponents $(s,p,q)$ one has $W\in B^{s}_{p,q}(\T^d)$ almost surely.
\begin{corollary}
Let $W:\O\to \D'(\T^d)$ be a Gaussian white noise. For exponents $(s, p, q)\in \R\times[1, \infty]\times[1, \infty]$ the following are equivalent: \begin{enumerate}
\item $W\in B^{s}_{p,q}(\T^d)$ almost surely
\item $\Big( s<-d/2 \text{ and } p,q\in [1, \infty] \Big)$ or $\Big( s=-d/2 \text{ and } p\in [1, \infty) \text{ and } q=\infty \Big)$
\end{enumerate}
\end{corollary}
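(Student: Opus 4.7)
The plan is to read the corollary as a bookkeeping exercise that repackages the six parts of Theorem \ref{thm:main}. The set of triples $(s,p,q)\in\R\times[1,\infty]\times[1,\infty]$ splits into four regimes according to the position of $s$ relative to $-d/2$ and, in the critical case $s=-d/2$, according to whether $q=\infty$ or $q<\infty$, and whether $p=\infty$ or $p<\infty$. For each regime exactly one part of Theorem \ref{thm:main} applies, so the argument is a case analysis.

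For (2)$\Rightarrow$(1): if $s<-d/2$, then Theorem \ref{thm:main}(3) yields $W\in B^s_{p,q}(\T^d)$ almost surely for every $p,q\in[1,\infty]$. If instead $s=-d/2$ with $p\in[1,\infty)$ and $q=\infty$, then Theorem \ref{thm:main}(1) gives the claim directly.

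For (1)$\Rightarrow$(2): assume $W\in B^s_{p,q}(\T^d)$ almost surely and suppose for contradiction that (2) fails. The complement of (2) splits into three subcases. If $s>-d/2$, then Theorem \ref{thm:main}(5) says $\P(W\in B^s_{p,q}(\T^d))=0$, contradiction. If $s=-d/2$ and $q\in[1,\infty)$ (with any $p\in[1,\infty]$), Theorem \ref{thm:main}(4) gives the same zero probability, again a contradiction. The only remaining possibility is $s=-d/2$, $q=\infty$, and $p=\infty$; here Theorem \ref{thm:main}(6) yields $\P(W\in B^{-d/2}_{\infty,\infty}(\T^d))=0$, a final contradiction. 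Hence (2) must hold.

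There is no genuine obstacle; the only thing to check is that the four regimes above exhaust the complement of (2) within $\R\times[1,\infty]\times[1,\infty]$, which is immediate from the trichotomy $s<-d/2$, $s=-d/2$, $s>-d/2$ together with the subdivision of $\{s=-d/2\}$ into $\{q<\infty\}$, $\{q=\infty,p<\infty\}$, and $\{q=\infty,p=\infty\}$.
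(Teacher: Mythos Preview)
Your proposal is correct and is exactly the intended argument: the paper's own proof is the single line ``This follows from Theorem \ref{thm:main},'' and your case analysis simply makes explicit which part of that theorem handles each regime of $(s,p,q)$.
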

\begin{proof}
This follows from Theorem \ref{thm:main}.
\end{proof}

Another consequence is on the tail behavior of $\|W\|_{B^{-d/2}_{p,\infty}(\T^d)}$. 
\begin{corollary}\label{cor:tail}
Let $W:\O\to \D'(\R^n)$ be a white noise and let $p\in [1, \infty)$.
there are constants $M,\sigma>0$ depending on $p$ such that for every $r>0$ the following inequality holds:
\begin{equation}\label{eq:Wtail}
\P\big( \big| \|W\|_{B^{-d/2}_{p,\infty}(\T^d)} - M\big|>r \big)\leq \exp(-r^2/(4\sigma^2)).
\end{equation}
Here for $M$ one can take the median of $\|W\|_{B^{-d/2}_{p,\infty}(\T^d)}$ and for $\sigma$ one can take 
\[\sigma = \left\{
    \begin{array}{ll}
      2^{3d/2} 2^{-3d/p} & \hbox{if $p\in [2,\infty)$;} \\
      (2\pi)^{\frac{d}{p} - \frac{d}{2}}, & \hbox{if $p\in [1,2]$.}
    \end{array}
  \right.
\]
\end{corollary}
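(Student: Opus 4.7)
The plan is to derive \eqref{eq:Wtail} as a Gaussian concentration estimate applied to $W$ viewed as a centered Gaussian random variable with values in the separable Banach space $X := B^{-d/2}_{p,\infty}(\T^d)$. By Theorem \ref{thm:main}(1), $W\in X$ almost surely, and Gaussian integrability (\cite[Corollary 3.2]{LT}) then gives $\E\|W\|_X<\infty$ and in particular the existence of the median $M$. Borell's (Borell--Tsirelson--Ibragimov--Sudakov) inequality, applied in the real Banach space underlying $X$, next yields a concentration bound of the form
\[\P\bigl(\bigl|\|W\|_X - M\bigr|>r\bigr) \le 2\exp\bigl(-r^2/(2\sigma_\R^2)\bigr),\]
where $\sigma_\R$ is the real weak variance of $W$ in $X$. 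Converting the real weak variance into the complex one via Lemma \ref{lem:complexgaus} produces the exponent $-r^2/(4\sigma_\C^2)$ and hence the form \eqref{eq:Wtail}.

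The substantive step is to compute $\sigma_\C(W)$. Using the definition of the Besov norm and $L^p$--$L^{p'}$ duality, one has the representation
\[\|W\|_X = \sup_{j\ge 0}\;\sup_{\|g\|_{L^{p'}(\T^d)}\le 1}\;2^{-jd/2}\bigl|\langle W_j, g\rangle\bigr|.\]
Since each $W_j$ is continuous and $L^{p'}(\T^d)$ is separable, the inner supremum may be restricted to a countable norming family, so that $\|W\|_X$ appears as the supremum of a separable complex Gaussian family. Its complex weak variance is therefore exactly
\[\sigma_\C(W) = \sup_{j\ge 0}\sigma_j,\]
where $\sigma_j$ is the complex weak variance of $2^{-jd/2}W_j$ in $L^p(\T^d)$.

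These $\sigma_j$ have already been bounded during the proof of Theorem \ref{thm:main}(1): for $p\in[2,\infty)$ one has $\sigma_j \le 2^{3d/2}\,2^{-(j+3)d/p}$, and for $p\in[1,2]$ one has $\sigma_j \le (2\pi)^{d/p-d/2}\,2^{-jd/2}$ (the $d$ in the exponent of the latter corrects the typographical expression $C_d = (2\pi)^{1/p-1/2}$ appearing at the end of the proof of (1)). Both bounds are nonincreasing in $j$ and are therefore attained at $j=0$, giving precisely the two values of $\sigma$ stated in the corollary. Substituting into the Borell estimate finishes the proof. The only genuine subtlety is the identification $\sigma_\C(W)=\sup_j\sigma_j$ via the norming family; once the required separability is noted, the concentration estimate follows immediately from Borell and the computations already carried out in Theorem \ref{thm:main}(1).
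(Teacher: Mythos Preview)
Your approach is essentially the same as the paper's: Gaussian concentration (Borell / \cite[Lemma 3.1]{LT}) applied to $W$ in $B^{-d/2}_{p,\infty}(\T^d)$, with the weak variance identified as $\sup_j \sigma_j$ using the estimates already obtained in the proof of Theorem~\ref{thm:main}(1), and Lemma~\ref{lem:complexgaus} converting the real variance to the complex one (hence the $4$ in place of $2$).

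There is one factual slip you should correct: $B^{-d/2}_{p,\infty}(\T^d)$ is \emph{not} separable (no Besov space with $q=\infty$ is), and the paper explicitly points out that $W$ does not take values in a separable subspace, referring to \cite[p.~60--61]{LT} for the appropriate non-separable framework. Your subsequent argument --- writing $\|W\|_X$ as a supremum over the countable family $\{2^{-jd/2}\langle W_j,g\rangle : j\ge 0,\ g\in D\}$ with $D$ a countable norming set in $L^{p'}$ --- is exactly the right repair and is in effect what the paper invokes via \cite[Theorem 5.1]{HV} and \cite[Lemma 3.1]{LT}; just do not call $X$ itself separable. A minor point: the sharp form of Borell's inequality around the median gives $\P(|\,\cdot\,|>r)\le 2(1-\Phi(r/\sigma_\R))\le \exp(-r^2/(2\sigma_\R^2))$ without the prefactor $2$, which is what is needed to match \eqref{eq:Wtail} exactly.
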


In particular, this result implies that $W$ satisfies 
\begin{equation}\label{eq:expint}
\E \exp\Big(\frac{1}{4\alpha^2}\|W\|_{B^{-d/2}_{p,\infty}(\T^d)}^2\Big)<\infty
\end{equation}
for all $\alpha>\sigma$ (see \cite[Corollary 3.2]{LT}).

\begin{proof}
By Theorem \ref{thm:main} we can define a mapping $Z:\O\to B^{-d/2}_{p,\infty}(\T^d)$ by $Z(\omega) = W(\omega)$.
As in \cite[Theorem 5.1]{HV} one can show that $Z$ defined by $Z(\omega) = W(\omega)$ is a Gaussian random variable in the sense of \cite{LT}. However, note that $Z$ does not take values in a separable subset of $B^{-d/2}_{p,\infty}(\T^d)$ (see \cite[p. 60-61]{LT}). Now \eqref{eq:Wtail} follows from \cite[Lemma 3.1]{LT}. Moreover, the choice for $\sigma$ follows from the fact that one can take $\sigma = \max_j \sigma_j$, where $\sigma_j$ is as in the proof of Theorem \ref{thm:main}. Note that in \cite[Lemma 3.1]{LT} one has to take into account Lemma \ref{lem:complexgaus}, because we consider the complex situation and this gives a number $4$ instead of $2$ in the exponential function.
\end{proof}

As a consequence one has the following result for the periodic Sobolev spaces. For the definition of the Sobolev space $H^{s,p}(\T^d)$ we refer to \cite[Chapter 3]{TrSch}.

\begin{corollary}
Let $W$ be a Gaussian white noise on $\T^d$. Then
\begin{enumerate}
\item For all $p\in [1, \infty)$ and $s<-d/2$ one has $\displaystyle \P\big(W\in H^{s,p}(\T^d)\big) = 1$.
\item For all $p\in [1, \infty]$ one has
$\displaystyle \P\big(W\in H^{-d/2,p}(\T^d)\big)=0$.
\end{enumerate}
\end{corollary}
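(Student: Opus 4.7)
The plan is to obtain the corollary as a direct consequence of Theorem \ref{thm:main} by sandwiching the Sobolev spaces $H^{s,p}(\T^d)$ between appropriate Besov spaces, using the standard embeddings \eqref{eq:elememb} together with the lifting embedding $B^{s+\epsilon}_{p,q}\hookrightarrow H^{s,r}\hookrightarrow B^{s-\epsilon}_{p,q}$ stated in the introduction.

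For part (1), given $p\in[1,\infty)$ and $s<-d/2$, I would set $\epsilon:=-d/2-s>0$, so that $s+\epsilon=-d/2$. The embedding $B^{s+\epsilon}_{p,\infty}(\T^d)\hookrightarrow H^{s,p}(\T^d)$ combined with Theorem \ref{thm:main}(1) (applied with exponent $p$) immediately yields $W\in H^{s,p}(\T^d)$ almost surely. Alternatively, one may simply pick any $\epsilon>0$ with $s+\epsilon<-d/2$ and invoke Theorem \ref{thm:main}(3) together with the same embedding; both approaches are essentially routine.

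For part (2), the strategy is to use the embeddings in \eqref{eq:elememb} to transfer the negative statements from Theorem \ref{thm:main}(4) and (6) to Sobolev spaces, splitting according to $p$. For $p\in(2,\infty)$ one has $H^{-d/2,p}(\T^d)\hookrightarrow B^{-d/2}_{p,p}(\T^d)$, and Theorem \ref{thm:main}(4) with $q=p\in[1,\infty)$ gives $\P(W\in B^{-d/2}_{p,p}(\T^d))=0$. For $p=2$ one uses $H^{-d/2,2}(\T^d)=B^{-d/2}_{2,2}(\T^d)$ and applies Theorem \ref{thm:main}(4) with $q=2$. For $p\in[1,2)$ the embedding $H^{-d/2,p}(\T^d)\hookrightarrow B^{-d/2}_{p,2}(\T^d)$ from \eqref{eq:elememb} combined again with Theorem \ref{thm:main}(4) (now with $q=2<\infty$) does the job. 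Finally, for $p=\infty$ I would invoke the embedding $H^{-d/2,\infty}(\T^d)\hookrightarrow B^{-d/2}_{\infty,\infty}(\T^d)$ (which follows from the $p>2$ case of \eqref{eq:elememb} in its $p=\infty$ limit, as recorded e.g.\ in \cite[Chapter 3]{TrSch}) and conclude from Theorem \ref{thm:main}(6).

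There is no real obstacle here, since all the work has already been done in Theorem \ref{thm:main}; the mild subtlety is just being systematic about the four sub-cases in part (2) (namely $p=2$, $p\in(2,\infty)$, $p\in[1,2)$, and $p=\infty$), making sure each embedding points in the correct direction so that an almost-sure non-membership in a Besov space transfers to the corresponding Sobolev space.
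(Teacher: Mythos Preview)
Your proposal is correct and follows essentially the same approach as the paper: the paper's proof is a terse two-liner, deducing (1) from Theorem \ref{thm:main}(3) together with \eqref{eq:elememb}, and (2) from Theorem \ref{thm:main}(4) together with \eqref{eq:elememb}. Your version merely spells out the implicit case split in \eqref{eq:elememb} and additionally invokes Theorem \ref{thm:main}(6) for $p=\infty$; this is a harmless and arguably cleaner way to handle that endpoint (the paper presumably absorbs $p=\infty$ into the bounded-domain embedding $H^{-d/2,\infty}(\T^d)\hookrightarrow H^{-d/2,r}(\T^d)$ for finite $r$, though it does not say so explicitly).
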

\begin{proof}
(1): This follows from Theorem \ref{thm:main} (3) and \eqref{eq:elememb}.

(2): This follows from Theorem \ref{thm:main} (4) and \eqref{eq:elememb}.
\end{proof}

It would be interesting to know whether the results of this section are valid for Gaussian white noises on domains $D\subset \R^d$ and on Riemannian manifolds.

\section{White noise and Fourier--Besov spaces on $\T^d$\label{sec:FourierBesov}}

Let $p,q\in [1, \infty]$ and $s\in \R$. For a distribution $f\in \D(\T^d)$ consider the following Fourier-Besov norm
\[\|f\|_{\hat{b}^s_{p,q}(\T^d)} = \Big(\sum_{j\geq 0} \Big(\sum_{k\in \Z^d} (|k|+1)^{sp}  |\varphi_j(k) \hat{f}(k)|^p\Big)^{q/p}\Big)^{1/q}.\]
Moreover, if $p=\infty$ or $q=\infty$, then one needs to use a supremum norm in this expression.

\begin{definition}
The Fourier-Besov space $\hat{b}^s_{p,q}(\T^d)$ is the space of all $f\in \D(\T^d)$ for which $\|f\|_{\hat{b}^s_{p,q}(\T^d)}<\infty$.
\end{definition}

Let $\phi$ and $(\varphi_j)_{j\geq 0}$ be as in Section \ref{sec:Besov}.
The following is an equivalent norm on $\hat{b}^s_{p,q}(\T^d)$:
\[|\!|\!|f|\!|\!|_{\hat{b}^s_{p,q}(\T^d)} = \Big(\sum_{j\geq 0} \Big(\sum_{k\in \Z^d} (|k|+1)^{sp}  |\varphi_j(k) \hat{f}(k)|^p\Big)^{q/p}\Big)^{1/q},\]
where if $p=\infty$ or $q=\infty$ one has to use the supremum norm again. To show the equivalence of the norms observe that the estimate $|\!|\!|f|\!|\!|_{\hat{b}^s_{p,q}(\T^d)} \leq \|f\|_{\hat{b}^s_{p,q}(\T^d)}$ follows from the fact that $|\varphi_j(k)|\leq 1$ and $\varphi_j(k)=0$ for all $k\in \Z$ which satisfy  $|k|>2^{j+1}$ or $|k|<2^{j-1}$. For the converse direction, we let $\varphi_{-1} =0$. Recall that for all $j\geq 0$ and $k\in \Z^d$ such that $2^{j-1}\leq |k|\leq 2^{j+1}$ one has $\sum_{m=-1}^{1} \varphi_{j+m}(k) = 1$. Therefore, by the triangle inequality in $\ell^q(\ell^p)$ and elementary calculations one sees that
\begin{align*}
\|f\|_{\hat{b}^s_{p,q}(\T^d)} & = \Big(\sum_{j\geq 0} \Big(\sum_{2^{j-1}\leq |k|\leq 2^{j+1}} (|k|+1)^{sp}  \Big|\sum_{m=-1}^{1} \varphi_{j+m}(k)\Big|^p |\hat{f}(k)|^p\Big)^{q/p}\Big)^{1/q}
\\ & \leq  \sum_{m=-1}^1 \Big(\sum_{j\geq 0} \Big(\sum_{2^{j-1}\leq |k|\leq 2^{j+1}} (|k|+1)^{sp}  |\varphi_{j+m}(k)|^p |\hat{f}(k)|^p\Big)^{q/p}\Big)^{1/q}
\\ & \leq  \sum_{m=-1}^1 \Big(\sum_{j\geq 0} \Big(\sum_{k\in \Z} (|k|+1)^{sp}  |\varphi_{j+m}(k)|^p |\hat{f}(k)|^p\Big)^{q/p}\Big)^{1/q}
\\ & \leq  3 \Big(\sum_{j\geq 0} \Big(\sum_{k\in \Z} (|k|+1)^{sp}  |\varphi_{j}(k)|^p |\hat{f}(k)|^p\Big)^{q/p}\Big)^{1/q} = 3 |\!|\!|f|\!|\!|_{\hat{b}^s_{p,q}(\T^d)}.
\end{align*}

From the above discussion and the Hausdorff--Young inequality one obtains the following result which has also been observed in \cite{Oh} for $q=\infty$.
\begin{proposition}
Let $s\in \R$ and $q\in [1, \infty]$. Let $p\in [2, \infty]$ and let $p'\in [1,2]$ satisfy $\frac{1}{p}+\frac{1}{p'}=1$. Then
$B^{s}_{p',q}(\T^d) \hookrightarrow \hat{b}^{s}_{p,q}(\T^d)$. Moreover, if $p=2$, then $B^{s}_{2,q}(\T^d) = \hat{b}^{s}_{2,q}(\T^d)$ with equivalent norms.
\end{proposition}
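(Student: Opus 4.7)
The plan is to exploit the frequency localization of the Littlewood--Paley blocks $f_j$ together with the classical Hausdorff--Young inequality, and to use the equivalent norm $\nn\cdot\nn_{\hat{b}^s_{p,q}(\T^d)}$ just established. Throughout I write $p\in[2,\infty]$ and let $p'\in[1,2]$ be its conjugate.

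First I would observe that for each $j\ge 0$ the function $\varphi_j$ is supported in the annulus $\{k\in\Z^d : 2^{j-1}\le |k|\le 2^{j+1}\}$ for $j\ge 1$, and in $\{|k|\le 2\}$ for $j=0$; in either case $(|k|+1)\eqsim 2^{j}$ uniformly on $\supp\varphi_j$. Consequently, for every $j$,
\[
\Big(\sum_{k\in\Z^d}(|k|+1)^{sp}|\varphi_j(k)\hat{f}(k)|^{p}\Big)^{1/p}
\eqsim 2^{js}\Big(\sum_{k\in\Z^d}|\varphi_j(k)\hat{f}(k)|^{p}\Big)^{1/p},
\]
with the obvious modification when $p=\infty$. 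Since $\widehat{f_j}(k)=\varphi_j(k)\hat{f}(k)$, the right-hand side equals $2^{js}\|\widehat{f_j}\|_{\ell^{p}(\Z^d)}$.

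Next I would invoke Hausdorff--Young on $\T^d$: for $p\in[2,\infty]$ and $p'$ conjugate,
\[
\|\widehat{f_j}\|_{\ell^{p}(\Z^d)} \lesssim_{p,d} \|f_j\|_{L^{p'}(\T^d)}.
\]
Combining these two estimates yields, for each $j\ge 0$,
\[
\Big(\sum_{k\in\Z^d}(|k|+1)^{sp}|\varphi_j(k)\hat{f}(k)|^{p}\Big)^{1/p}
\lesssim_{p,d} 2^{js}\|f_j\|_{L^{p'}(\T^d)}.
\]
Taking the $\ell^q$ norm over $j\ge 0$ (with the usual sup convention if $q=\infty$) gives
\[
\nn f \nn_{\hat{b}^{s}_{p,q}(\T^d)} \lesssim_{p,d} \|f\|_{B^{s}_{p',q}(\T^d)},
\]
and combining with the norm-equivalence $\nn\cdot\nn_{\hat{b}^{s}_{p,q}(\T^d)}\eqsim \|\cdot\|_{\hat{b}^{s}_{p,q}(\T^d)}$ already proved, this establishes the embedding $B^{s}_{p',q}(\T^d)\hookrightarrow \hat{b}^{s}_{p,q}(\T^d)$.

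For the case $p=p'=2$, Hausdorff--Young is replaced by Parseval's identity $\|f_j\|_{L^2(\T^d)}^{2}=(2\pi)^{d}\sum_{k\in\Z^d}|\varphi_j(k)\hat{f}(k)|^{2}$, which turns the one-sided bound into a two-sided equivalence on each Littlewood--Paley piece; taking $\ell^q$ in $j$ then gives $B^{s}_{2,q}(\T^d)=\hat{b}^{s}_{2,q}(\T^d)$ with equivalent norms. I do not anticipate any real obstacle; the only minor care is in the case $j=0$, where the relation $(|k|+1)\eqsim 2^{j}$ has to be checked by hand rather than by the annular structure, and in the trivial bookkeeping needed when $p=\infty$ or $q=\infty$ to replace sums by suprema.
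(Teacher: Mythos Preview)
Your proof is correct and follows exactly the approach the paper indicates: the paper does not give a detailed proof but simply states that the result follows ``from the above discussion and the Hausdorff--Young inequality,'' which is precisely what you have carried out. Your added care about the $j=0$ block and the $p=\infty$, $q=\infty$ conventions is appropriate and not in tension with anything in the paper.
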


Let
\[[f]_{\hat{b}^{s}_{p,q}(\T^d)} \Big(\sum_{j\geq 0} 2^{sqj} \Big( \sum_{2^{j-1}\leq |k|\leq 2^{j+1}}|\hat{f}(k)|^p\Big)^{q/p}\Big)^{1/q} ,\]
where one has to use the supremum norm if $q=\infty$. Then
this also defines an equivalent norm on $\hat{b}^s_{p,q}(\T^d)$. Indeed, this follows from the fact that for $2^{j-1}\leq |k|\leq 2^{j+1}$ one has
\[\tfrac12 \cdot 2^{j} \leq  (|k|+1)\leq 4 \cdot 2^{j}.\]
In the calculations below we use this equivalent norm.

In \cite{Oh} it has been proved that the Gaussian white noise as defined in Lemma \ref{prop:white} satisfies $W\in \hat{b}^{s}_{p,q}(\T)$ almost surely as soon as $sp<-1$. The following result is an extension of this result to the sharp exponent and to arbitrary dimensions $d$.

\begin{theorem}\label{thm:main2}
Let $W:\O\to \D'(\T^d)$ be a Gaussian white noise.
\begin{enumerate}
\item For all $p\in [1, \infty)$ one has
$\displaystyle  \P\big(W\in \hat{b}^{-d/p}_{p,\infty}(\T^d)\big) = 1$.
\item For all $p\in [1, \infty)$ there exists a constant $c_{p,d}$ such that
\[\P\big([W]_{\hat{b}^{-d/p}_{p,\infty}(\T^d)}\geq c_{p,d}\big) =1.\]
\item For all $p,q\in [1, \infty]$ and $s<-d/p$ one has
$\displaystyle \P\big(W\in \hat{b}^{s}_{p,q}(\T^d)\big)=1$.
\item For any $p\in [1, \infty]$ and $q\in [1, \infty)$ one has
$\displaystyle \P\big(W\in \hat{b}^{-d/p}_{p,q}(\T^d)\big)=0$.
\item For any $p,q\in [1, \infty]$ and $s>-d/p$ one has
$\displaystyle \P\big(W\in \hat{b}^{s}_{p,q}(\T^d)\big)=0$.
\item One has $\displaystyle \P\big(W\in \hat{b}^{0}_{\infty,\infty}(\T^d)\big)=0$.
\end{enumerate}
\end{theorem}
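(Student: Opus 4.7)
The approach parallels the proof of Theorem~\ref{thm:main}. By the reduction at the start of that proof, I may assume $W = \sum_{k \in \Z^d} \gamma_k e_k$, so that $\hat W(k) = \gamma_{-k}$ is a standard complex Gaussian, and throughout I use the equivalent norm $[\,\cdot\,]_{\hat{b}^s_{p,q}(\T^d)}$. Write $S_j := \{k \in \Z^d : 2^{j-1} \leq |k| \leq 2^{j+1}\}$; then $N_j := \#S_j \asymp 2^{jd}$ and the shells are pairwise disjoint. Define $\xi_j := (\gamma_k \one_{S_j}(k))_{k \in \Z^d}$ as an element of $\ell^p(\Z^d)$, so that $(\xi_j)_{j \geq 0}$ is a sequence of independent complex Gaussian vectors satisfying
\[
[W]_{\hat{b}^{-d/p}_{p,\infty}(\T^d)} \;=\; \sup_{j \geq 0} 2^{-jd/p} \|\xi_j\|_{\ell^p(\Z^d)}.
\]

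For (1) the plan is to apply Proposition~\ref{prop:HV} to the sequence $(2^{-jd/p}\xi_j)_{j \geq 0}$ in $\ell^p(\Z^d)$. The first moments are uniformly bounded since $\E\|2^{-jd/p}\xi_j\|_{\ell^p(\Z^d)} \leq (2^{-jd} N_j \E|\gamma_1|^p)^{1/p} \lesssim \|\gamma_1\|_{L^p(\Omega)}$. For the complex weak variance one computes $\sigma_j = 2^{-jd/p}\sup\{\|a\|_{\ell^2(S_j)} : \|a\|_{\ell^{p'}(S_j)} \leq 1\}$. The inner supremum equals $1$ when $p \geq 2$ (by the contractive embedding $\ell^{p'} \hookrightarrow \ell^2$) and equals $N_j^{1/p - 1/2}$ when $p < 2$ (attained at the constant vector via Hölder), so $\sigma_j$ decays geometrically, like $2^{-jd/p}$ or $2^{-jd/2}$ respectively. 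By \eqref{eq:rhoan2} this yields $\rho_\Theta((\sigma_j)_{j \geq 0}) < \infty$, and Proposition~\ref{prop:HV} then gives $\E[W]_{\hat{b}^{-d/p}_{p,\infty}(\T^d)} < \infty$.

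For (2) I would copy the SLLN argument in the proof of Theorem~\ref{thm:main}(2): writing $\sum_{k \in S_j} |\gamma_k|^p$ as a difference of two sums over balls of radii $2^{j\pm 1}$ and applying the strong law of large numbers to the iid positive random variables $|\gamma_k|^p$, the quantity $2^{-jd}\sum_{k \in S_j} |\gamma_k|^p$ converges almost surely to a positive constant, producing the required lower bound. Statements (3)--(5) then follow by combining (1) and (2) with standard embeddings: (3) from $\hat{b}^{-d/p}_{p,\infty} \hookrightarrow \hat{b}^s_{p,q}$ when $s < -d/p$ (absorb the factor $2^{(s+d/p)j}$ into an $\ell^{q'}$ norm by H\"older); (4) from the fact that by (2) the terms $2^{-jd/p}\|\xi_j\|_{\ell^p(\Z^d)}$ stay bounded below, so the $\ell^q$-norm with $q < \infty$ must diverge; and (5) via the embedding $\hat{b}^s_{p,q} \hookrightarrow \hat{b}^{-d/p}_{p,1}$ for $s > -d/p$ (reducing (5) to (4)), with the case $p = \infty$, $s > 0$ handled through $\hat{b}^s_{\infty,q} \hookrightarrow \hat{b}^0_{\infty,\infty}$ together with (6). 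Finally (6) is immediate: since each shell $S_j$ is nonempty, $[W]_{\hat{b}^0_{\infty,\infty}(\T^d)} \geq \sup_{k \neq 0} |\gamma_k|$, which is $+\infty$ almost surely by Borel--Cantelli applied to the iid complex Gaussians.

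The main obstacle is step (1): Proposition~\ref{prop:HV} is stated for Gaussian vectors in a single Banach space, so the $\xi_j$ must be embedded in a common $\ell^p(\Z^d)$ via zero-extension; more importantly, the weak-variance calculation bifurcates at $p = 2$, and in the subcritical range $p < 2$ it is the saturation inequality $\|a\|_{\ell^2(S_j)} \leq N_j^{1/p - 1/2}\|a\|_{\ell^{p'}(S_j)}$ combined with the exact cancellation $2^{-jd/p} N_j^{1/p - 1/2} \asymp 2^{-jd/2}$ that secures geometric decay. No Hausdorff--Young inequality is needed here (in contrast to Theorem~\ref{thm:main}), which is precisely what promotes the sharp exponent from $-d/2$ to $-d/p$ on the Fourier side.
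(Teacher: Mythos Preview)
Your argument is correct overall, but the route you take for (1) differs from the paper's and is in fact more laborious than necessary. The paper does \emph{not} invoke Proposition~\ref{prop:HV} here at all: since the Fourier--Besov norm involves only $\ell^p$-sums of the independent scalars $|\gamma_k|^p$, the strong law of large numbers already gives the almost-sure limit
\[
\lim_{j\to\infty} 2^{-jd}\sum_{2^{j-1}\le |k|\le 2^{j+1}} |\gamma_k|^p = (2^d-2^{-d})\,\E|\gamma_1|^p,
\]
by writing the annular sum as a difference of two ball sums --- exactly the computation you propose for (2). This single observation settles (1) and (2) simultaneously, with no need for weak-variance estimates or the Orlicz machinery. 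What your approach buys is the moment bound $\E[W]_{\hat b^{-d/p}_{p,\infty}}<\infty$ rather than mere almost-sure finiteness; this is stronger, but the paper recovers the integrability later (in Corollary~\ref{cor:tail2}) by computing the $\sigma_j$ directly.

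Two small points. First, the shells $S_j=\{2^{j-1}\le |k|\le 2^{j+1}\}$ are \emph{not} pairwise disjoint (adjacent ones overlap on $\{2^{j}\le |k|\le 2^{j+1}\}$), so the $\xi_j$ are not independent; fortunately neither your application of Proposition~\ref{prop:HV} nor your argument for (4) actually uses independence, so this is a harmless misstatement. Second, your treatment of (4) appeals to (2), which is stated only for $p<\infty$; the case $p=\infty$, $q<\infty$ is covered by the embedding $\hat b^{0}_{\infty,q}\hookrightarrow \hat b^{0}_{\infty,\infty}$ together with (6), which you might note explicitly.
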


The analogue Remark \ref{rem:4cases} (iii) holds for the above situation, this time with $X = \hat{b}^{-d/p}_{p,\infty}(\T^d)$ and $p\in [1, \infty)$.

\begin{proof}
As in the proof of Theorem \ref{thm:main}, it is sufficient to consider a Gaussian white noise $W$ as defined in Proposition \ref{prop:white}. This follows again from Lemma \ref{lem:equidistr} and the identity $\hat{W}(k) = \mathcal{W}(e_k)$.

For $p<\infty$ let $w_{j,p} = \Big(\sum_{2^{j-1}\leq |k|\leq 2^{j+1}}|\g_k|^p\Big)^{1/p}$. Then one has
\[[W]_{\hat{b}^{s}_{p,q}(\T^d)} = \|2^{js} w_{j,p}\|_{\ell^q}.\]

(1): \ By the above we can write
\[[W]_{\hat{b}^{-1/p}_{p,\infty}(\T^d)} = \sup_{j\geq 0} 2^{-j d/p} w_{j,p}.\]
Noting that
\begin{align*}
2^{-jd} |w_{j,p}|^p &= 2^{-j d} \sum_{2^{j-1}\leq |k|\leq 2^{j+1}}|\g_k|^p \\ & = 2^d 2^{-(j+1)d} \sum_{|k|\leq 2^{j+1}}|\g_k|^p - 2^{-d} 2^{-(j-1)d} \sum_{|k|<2^{j-1}}|\g_k|^p,
\end{align*}
from the strong law of large numbers we see that
\[\lim_{j\to \infty} 2^{-jd} |w_{j,p}|^p = (2^{d} - 2^{-d})\E |\g_1|^p,\]
almost surely. Therefore, $\sup_{j\geq 0} 2^{-j d/p} w_{j,p}<\infty$  almost surely and this proves (1).

(2): It follows from the proof of (1) that $[W]_{\hat{b}^{-1/p}_{p,\infty}(\T^d)}\geq (2^{d} - 2^{-d}) \|\g_1\|_{L^p(\O)}$ almost surely, and this proves the result.

(3): This follows from (1) in the same way as in Theorem \ref{thm:main}.

(4): This follows from (2) in the same way as in Theorem \ref{thm:main}. This time $p\in [1, 2)$ does not have to be considered separately, because (2) holds for all $p\in [1, \infty)$.

(5): This follows from (2) in the same way as in Theorem \ref{thm:main}.

(6): In this case one has
\[[W]_{\hat{b}^{s}_{p,q}(\T^d)} = \sup_{j\geq 0} \sup_{2^{j-1} \leq |k|\leq 2^{j+1}}|\g_k| = \sup_{k\in \Z^d} |\g_k|.\]
It is well-known that the latter is infinite almost surely (see \cite[equation (3.7)]{LT}).
\end{proof}

The following result characterizes for which exponents $(s,p,q)$ one has $W\in \hat{b}^{s}_{p,q}(\T^d)$ almost surely.
\begin{corollary}
Let $W:\O\to \D'(\T^d)$ be a Gaussian white noise. For exponents $(s, p, q)\in \R\times[1, \infty]\times[1, \infty]$ the following are equivalent: \begin{enumerate}
\item $W\in \hat{b}^{s}_{p,q}(\T^d)$ almost surely
\item $\Big( s<-d/p \text{ and } p,q\in [1, \infty] \Big)$ or $\Big( s=-d/p \text{ and } p\in [1, \infty) \text{ and } q=\infty \Big)$
\end{enumerate}
\end{corollary}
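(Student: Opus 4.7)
The plan is a straightforward case analysis that reduces the corollary to the appropriate items of Theorem \ref{thm:main2}. Condition (2) partitions the triples $(s,p,q) \in \R \times [1,\infty] \times [1,\infty]$ into explicit regions, and each region is matched by one item of that theorem, so the biconditional is obtained by checking cases one at a time.

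For the direction $(2) \Rightarrow (1)$, there are two subcases. If $s < -d/p$, then Theorem \ref{thm:main2}(3) yields $\P(W \in \hat{b}^s_{p,q}(\T^d)) = 1$ with no restriction on $p,q \in [1,\infty]$. If instead $s = -d/p$ with $p \in [1,\infty)$ and $q = \infty$, then Theorem \ref{thm:main2}(1) gives the conclusion directly.

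For the direction $(1) \Rightarrow (2)$, I argue the contrapositive. The negation of (2) leaves exactly three regions in parameter space: (a) $s > -d/p$ with arbitrary $p,q$; (b) $s = -d/p$ with $q < \infty$ and any $p \in [1,\infty]$; and (c) $s = -d/p$, $p = \infty$, $q = \infty$, which forces $s = 0$. Case (a) is handled by Theorem \ref{thm:main2}(5), case (b) by Theorem \ref{thm:main2}(4) (which as stated is uniform in $p \in [1,\infty]$), and case (c) by Theorem \ref{thm:main2}(6). In each situation $\P(W \in \hat{b}^s_{p,q}(\T^d)) = 0$, contradicting (1).

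There is essentially no obstacle: Theorem \ref{thm:main2} was structured precisely to cover these regions, so the corollary is bookkeeping. The only minor point worth noting is that item (4) is stated uniformly in $p \in [1,\infty]$, so it already dispatches the mixed case $p = \infty$, $q < \infty$, $s = 0$; otherwise one would need the trivial embedding $\hat{b}^{0}_{\infty,q}(\T^d) \hookrightarrow \hat{b}^{0}_{\infty,\infty}(\T^d)$ (valid for any $q$) to reduce to item (6). No new probabilistic or analytic input is needed beyond Theorem \ref{thm:main2}.
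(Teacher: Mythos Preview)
Your proposal is correct and takes essentially the same approach as the paper, which simply states that the corollary follows from Theorem~\ref{thm:main2}; you have just spelled out the case analysis explicitly. The partition of the complement of (2) into the three regions (a), (b), (c) is accurate, and each is indeed handled by the item of Theorem~\ref{thm:main2} you cite.
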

\begin{proof}
This follows from Theorem \ref{thm:main2}.
\end{proof}

Another consequence is on the tail behavior of $\|W\|_{\hat{b}^{-d/p}_{p,\infty}(\T^d)}$.
\begin{corollary}\label{cor:tail2}
Let $W:\O\to \D'(\R^n)$ be a white noise and let $p\in [1, \infty)$.
there are constants $M,\sigma>0$ depending on $p$ such that for every $r>0$ the following inequality holds:
\begin{equation}\label{eq:Wtail2}
\P\big( \big| \|W\|_{\hat{b}^{-d/p}_{p,\infty}(\T^d)} - M\big|>r \big)\leq \exp(-r^2/(4\sigma^2)).
\end{equation}
Here for $M$ one can take the median of $\|W\|_{\hat{b}^{-d/p}_{p,\infty}(\T^d)}$ and for $\sigma$ one can take
\[\sigma = \left\{
    \begin{array}{ll}
      1 & \hbox{if $p\in [2,\infty)$;} \\
      2^{3d/p} 2^{-3d/2}, & \hbox{if $p\in [1,2]$.}
    \end{array}
  \right.
\]
\end{corollary}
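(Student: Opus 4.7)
The plan is to mirror the proof of Corollary \ref{cor:tail} step by step, replacing $B^{-d/2}_{p,\infty}(\T^d)$ by $\hat{b}^{-d/p}_{p,\infty}(\T^d)$ throughout. By Theorem \ref{thm:main2}(1), the mapping $Z(\omega)=W(\omega)$ takes values in $\hat{b}^{-d/p}_{p,\infty}(\T^d)$, and as in \cite[Theorem 5.1]{HV} it is a Gaussian random variable in the sense of \cite{LT} (though not one concentrated on a separable subspace, cf.\ \cite[pp.~60--61]{LT}). The concentration estimate \eqref{eq:Wtail2} then follows from \cite[Lemma 3.1]{LT} with $M$ the median of $\|W\|_{\hat{b}^{-d/p}_{p,\infty}(\T^d)}$; the factor $4$ rather than $2$ in the exponent comes from Lemma \ref{lem:complexgaus}, exactly as in Corollary \ref{cor:tail}.

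What remains is to identify a valid choice of $\sigma$. As in the proof of Theorem \ref{thm:main}, one can take $\sigma = \max_{j\geq 0} \sigma_j$, where $\sigma_j$ is the complex weak variance of the $j$-th dyadic block of $W$ inside the appropriate $\ell^p$-space. Using the equivalent norm from the discussion preceding Theorem \ref{thm:main2}, this block is $T_j W = 2^{-jd/p}(\varphi_j(k)\gamma_k)_{k\in \Z^d}$, and for $g=(g_k)\in \ell^{p'}$ with $\|g\|_{\ell^{p'}}\leq 1$ identity \eqref{eq:complexGausprop} yields
\[
\E|\lb T_j W,g\rb|^2 \;=\; 2^{-2jd/p}\sum_{k\in \Z^d}|\varphi_j(k)|^2|g_k|^2 \;\leq\; 2^{-2jd/p}\sum_{k\in \supp \varphi_j}|g_k|^2.
\]

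If $p\in[2,\infty)$, then $p'\leq 2$ and the embedding $\ell^{p'}\hookrightarrow \ell^2$ has norm one, giving $\sigma_j\leq 2^{-jd/p}$ and hence $\sigma=\sigma_0\leq 1$. If $p\in[1,2]$, then $p'\geq 2$ and H\"older's inequality, applied to the at most $2^{(j+3)d}$ lattice points in $\supp\varphi_j$, gives $\bigl(\sum_{k\in \supp\varphi_j}|g_k|^2\bigr)^{1/2}\leq 2^{(j+3)d(1/p-1/2)}\|g\|_{\ell^{p'}}$. Substituting this bound produces $\sigma_j\leq 2^{3d/p-3d/2}\cdot 2^{-jd/2}$, and the supremum over $j\geq 0$ is $2^{3d/p}2^{-3d/2}$, achieved at $j=0$, matching the stated value.

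The only mild point of care is the same as in Corollary \ref{cor:tail}: one must verify that $Z$ qualifies as a Gaussian random variable in the framework of \cite{LT} even though its range is not separable, and that \cite[Lemma 3.1]{LT} can be invoked in that non-separable setting. Once this is granted, the remaining estimate for $\sigma$ reduces to the two-case block-wise H\"older/embedding computation above, with no further obstacle.
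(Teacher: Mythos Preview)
Your proof is correct and follows essentially the same route as the paper: reduce to the framework of Corollary~\ref{cor:tail} via \cite[Theorem 5.1]{HV} and \cite[Lemma 3.1]{LT}, and then bound the block-wise complex weak variances $\sigma_j$ in $\ell^p(\Z^d)$ using $\ell^{p'}\hookrightarrow\ell^2$ for $p\geq 2$ and H\"older's inequality over the $\leq 2^{(j+3)d}$ lattice points in the support for $p\leq 2$. The only cosmetic difference is that the paper works with the sharp cutoff $|k|\leq 2^{j+1}$ in place of your $\varphi_j$, which leads to the same bounds since $|\varphi_j|\leq 1$.
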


Again this result implies that $W$ satisfies the exponential integrability result in  \eqref{eq:expint} with  $B^{-d/2}_{p,\infty}(\T^d)$ replaced by $\hat{b}^{-d/p}_{p,\infty}(\T^d)$.

\begin{proof}
This can be proved in the same way as in Corollary \ref{cor:tail}. To calculate the $\sigma_j$'s in this case, let $G_j:\O\to \ell^p(\Z^d)$ be given by $G_j = 2^{-jd/p}\sum_{|k|\leq 2^{j+1}} \gamma_k u_k$, where $(u_k)_{k\in \Z^d}$ is the standard unit basis of $\ell^p(\Z^d)$. Then for all $a=(a_k)_{k\in \Z^d}$ in $\ell^{p'}(\Z^d)$ with norm $\leq 1$, one has that
\[\E|\lb G_j, a\rb|^2 = 2^{-jd} \sum_{|k|\leq 2^{j+1}} |a_k|^2.\]
Hence, if $p\in [2,\infty)$, then $\|a\|_{\ell^2(\Z^d)}\leq \|a\|_{\ell^{p'}(\Z^d)}\leq 1$,
and therefore, $\sigma_j \leq 2^{-jd/p}$. It follows that $\sigma = \max_j \sigma_j \leq 1$. If $p\in [1, 2)$, then by H\"older's inequality
\[\Big(\sum_{|k|\leq 2^{j+1}} |a_k|^2\Big)^{1/2}\leq 2^{(j+3)d/p} 2^{-(j+3)d/2} \|a\|_{\ell^{p'}(\Z^d)} \leq 2^{(j+3)d/p} 2^{-(j+3)d/2}.\]
Hence $\sigma_j \leq 2^{3d/p} 2^{-(j+3)d/2}$ and it follows that $\sigma = \max_j \sigma_j \leq 2^{3d/p} 2^{-3d/2}$.
\end{proof}

\providecommand{\bysame}{\leavevmode\hbox to3em{\hrulefill}\thinspace}


\begin{thebibliography}{10}

\bibitem{BT}
{\sc \'A B\'enyi and T.~Oh}, \emph{{Modulation spaces, Wiener amalgam spaces,
  and Brownian motions}}, preprint: http://arxiv.org/abs/1007.1957.

\bibitem{Bourgain}
{\sc J.~Bourgain}, \emph{Periodic nonlinear {S}chr\"odinger equation and
  invariant measures}, Comm. Math. Phys. \textbf{166} (1994), no.~1, 1--26.

\bibitem{Cie2}
{\sc Z.~Ciesielski}, \emph{Orlicz spaces, spline systems, and {B}rownian
  motion}, Constr. Approx. \textbf{9} (1993), no.~2-3, 191--208.

\bibitem{CKR}
{\sc Z.~Ciesielski, G.~Kerkyacharian, and B.~Roynette}, \emph{Quelques espaces
  fonctionnels associ\'es \`a des processus gaussiens}, Studia Math.
  \textbf{107} (1993), no.~2, 171--204.

\bibitem{GrafClas}
{\sc L.~Grafakos}, \emph{Classical {F}ourier analysis}, second ed., Graduate
  Texts in Mathematics, vol. 249, Springer, New York, 2008.

\bibitem{HV}
{\sc T.~P. Hyt{\"o}nen and M.~C. Veraar}, \emph{On {B}esov regularity of
  {B}rownian motions in infinite dimensions}, Probab. Math. Statist.
  \textbf{28} (2008), no.~1, 143--162.

\bibitem{Kal}
{\sc O.~Kallenberg}, \emph{{F}oundations of {M}odern {P}robability}, second
  ed., Probability and its Applications (New York), Springer-Verlag, New York,
  2002.

\bibitem{Kus}
{\sc S.~Kusuoka}, \emph{The support property of a {G}aussian white noise and
  its applications}, J. Fac. Sci. Univ. Tokyo Sect. IA Math. \textbf{29}
  (1982), no.~2, 387--400.

\bibitem{LT}
{\sc M.~Ledoux and M.~Talagrand}, \emph{Probability in {B}anach spaces},
  Ergebnisse der Mathematik und ihrer Grenzgebiete (3), vol.~23,
  Springer-Verlag, Berlin, 1991, Isoperimetry and processes.

\bibitem{Oh}
{\sc T.~Oh}, \emph{Invariance of the white noise for {K}d{V}}, Comm. Math.
  Phys. \textbf{292} (2009), no.~1, 217--236.

\bibitem{QuVa}
{\sc J.~Quastel and B.~Valk{\'o}}, \emph{Kd{V} preserves white noise}, Comm.
  Math. Phys. \textbf{277} (2008), no.~3, 707--714.

\bibitem{ReRo}
{\sc M.~Reed and L.~Rosen}, \emph{Support properties of the free measure for
  {B}oson fields}, Comm. Math. Phys. \textbf{36} (1974), 123--132.

\bibitem{Roy}
{\sc B.~Roynette}, \emph{Mouvement brownien et espaces de {B}esov}, Stochastics
  Stochastics Rep. \textbf{43} (1993), no.~3-4, 221--260.

\bibitem{TrSch}
{\sc H.-J. Schmeisser and H.~Triebel}, \emph{Topics in {F}ourier analysis and
  function spaces}, Mathematik und ihre Anwendungen in Physik und Technik
  [Mathematics and its Applications in Physics and Technology], vol.~42,
  Akademische Verlagsgesellschaft Geest \& Portig K.-G., Leipzig, 1987.

\bibitem{Tri}
{\sc H.~Triebel}, \emph{{I}nterpolation {T}heory, {F}unction {S}paces,
  {D}ifferential {O}perators}, North-Holland Mathematical Library, vol.~18,
  North-Holland Publishing Co., Amsterdam, 1978.

\bibitem{VerCor}
{\sc M.~C. Veraar}, \emph{Correlation inequalities and applications to
  vector-valued {G}aussian random variables and fractional {B}rownian motion},
  Potential Anal. \textbf{30} (2009), no.~4, 341--370.

\end{thebibliography}
\end{document}